\newtheorem{algorithm}{Algorithm}
\newcommand{\bq}{{\bf q}}
\newcommand{\bn}{{\bf n}}
\newcommand{\bx}{{\bf x}}
\newcommand{\bv}{{\bf v}}
\newcommand{\bw}{{\bf w}}
\def\T{{\mathcal T}}
\def\Q{{\mathbb Q}}
\def\l{{\langle}}
\def\r{{\rangle}}
\def\bn{{\bf n}}
\def\bq{{\bf q}}
  \def\b#1{\mathbf{#1}} 
\def\a#1{\begin{align*}#1\end{align*}} \def\an#1{\begin{align}#1\end{align}} 
 \def\div{\operatorname{div}}
\def\p#1{\begin{pmatrix}#1\end{pmatrix}}
\def\bbQ{\mathbb{Q}}
\newcommand{\pT}{{\partial T}}
\def\3bar{{|\hspace{-.02in}|\hspace{-.02in}|}}
\title{A stabilizer free weak Galerkin finite element
   method on polytopal mesh: Part III}
\author{Xiu Ye\thanks{Department of
Mathematics, University of Arkansas at Little Rock, Little Rock, AR
72204 (xxye@ualr.edu). This research was supported in part by
National Science Foundation Grant DMS-1620016.}
\and
Shangyou Zhang\thanks{Department of
Mathematical Sciences, University of Delaware, Newark, DE 19716 (szhang@udel.edu).}
}
\begin{document}

\maketitle

\begin{abstract}
A weak Galerkin (WG) finite element method without stabilizers was introduced in [J. Comput. Appl. Math., 371 (2020). arXiv:1906.06634] on polytopal mesh. Then it was improved in
[arXiv:2008.13631] with order one superconvergence.
The goal of this paper is to develop a new stabilizer free WG method on polytopal mesh. This method has convergence rates two orders higher than the optimal convergence rates for the corresponding WG solution in both an energy norm and the $L^2$ norm.
The numerical examples are tested for low and  high order elements in two and three dimensional  spaces.
\end{abstract}

\begin{keywords}
weak Galerkin finite element methods, second-order
elliptic problems, polytopal meshes, superconvergence
\end{keywords}

\begin{AMS}
Primary: 65N15, 65N30; Secondary: 35J50
\end{AMS}
\pagestyle{myheadings}

\section{Introduction}\label{Section:Introduction}

A weak Galerkin finite element method without stabilizers on polytopal mesh has been developed in \cite{sf-wg} and improved in \cite{sf-wg-part-II} for the Poisson equation:
\begin{eqnarray}
-\Delta u&=&f\quad \mbox{in}\;\Omega,\label{pde}\\
u&=&0\quad\mbox{on}\;\partial\Omega.\label{bc}
\end{eqnarray}
Here  $\Omega$ is a  polygonal or polyhedral domain.
%The WG method developed in \cite{sf-wg,sf-wg-part-II} has the following simple formulation without any stabilizers:
%\begin{equation}\label{dfe}
%(\nabla_w u_h,\nabla_w v)=(f,v),
%\end{equation}
%where  $\nabla_w$ is weak gradient.
A stabilizing terms  are often needed in finite element methods
with discontinuous approximations  to ensure weak continuity of discontinuous functions 
across element boundaries. Development of  stabilizer free discontinuous  finite element method is desirable because it  simplifies
finite element formulation and reduces complexity of coding. However it is a difficult task on polygonal or polyhedral mesh.

The idea of removing stabilizers for the WG methods in \cite{sf-wg,sf-wg-part-II}  is how to approximate weak gradient $\nabla_w$.  A  polynomial of degree $j$ is used in \cite{sf-wg,cdg2} to approximate  weak gradient $\nabla_w$. Here $j=k+n-1$ and $n$ is the number of sides of  polytopal element. The authors in \cite{aw,aw1} have relaxed the requirement of  polynomial degree of approximation. Rational function Wachspress coordinates \cite{cw} are used in \cite{liu, mu} to approximate weak gradient. A new stabilizer free WG method has been introduced recently in \cite{sf-wg-part-II} on polytopal mesh, which has order one superconvergence.  Piecewise low order polynomials on a polytopal element are employed  for $\nabla_w$ in \cite{sf-wg-part-II},  instead of using one piece high order polynomials in \cite{sf-wg}.

The goal of this paper is to introduce a new WG method without stabilizers on polygonal/polyhedral mesh, which has order two superconvergence, compared with order one superconvergence of the WG method  in \cite{sf-wg-part-II}.
The superconvergence is proved for the numerical approximation in both an energy norm and the $L^2$ norm.  Many numerical tests are conducted for the new WG elements of different degrees in two and three dimensional spaces.

\section{Weak Galerkin Finite Element Method}\label{Section:wg-fem}

Let ${\cal T}_h$ be a partition of the domain $\Omega$ consisting of
polygons/polyhedra  that satisfies
a set of conditions defined in \cite{wymix}. Let ${\cal E}_h$ denote
the set of all edges or flat faces in ${\cal T}_h$, and ${\cal
E}_h^0={\cal E}_h\backslash\partial\Omega$ denote the set of all
interior edges or flat faces. Denote by $h_T$ the diameter of $T\in\T_h$ and mesh size $h=\max_{T\in\T_h} h_T$
for ${\cal T}_h$. Let $P_k(T)$ consist all the polynomials on $T$ with degree less or equal to $k$.

We define $V_h$  the weak Galerkin finite element space for a given integer $k \ge 0$  as follows
\begin{equation}\label{vhspace}
V_h=\{v=\{v_0,v_b\}:\; v_0|_T\in P_k(T),\ v_b|_e\in P_{k+1}(e),\ e\subset\pT,\;  T\in \T_h\}.
\end{equation}
Define  $V_h^0$ a subspace of $V_h$  as
\begin{equation}\label{vh0space}
V^0_h=\{v: \ v\in V_h,\  v_b=0 \mbox{ on } \partial\Omega\}.
\end{equation}
%We would like to emphasize that any function $v\in V_h$ has a single
%value $v_b$ on each edge $e\in\E_h$.

A weak gradient $\nabla_wv$ for $v=\{v_0,v_b\}\in V_h$ is defined as a piecewise polynomial such that  $\nabla_w v|_T \in \Lambda_k(T)$ for $T\in\T_h$ and
\begin{equation}\label{d-d}
  (\nabla_w v, \bq)_T = -(v_0, \nabla\cdot \bq)_T+ \langle v_b, \bq\cdot\bn\rangle_{\partial T}\qquad
   \forall \bq\in \Lambda_k(T).
\end{equation}
We will define $\Lambda_k(T)$ in the next section.

\begin{algorithm}
A numerical approximation for (\ref{pde})-(\ref{bc}) is seeking $u_h=\{u_0,u_b\}\in V_h^0$ that
satisfies  the following equation:
\begin{equation}\label{wg}
(\nabla_wu_h,\nabla_wv)=(f,\; v_0) \quad\forall v=\{v_0,v_b\}\in V_h^0.
\end{equation}
\end{algorithm}

The following notations will be adopted,
\begin{eqnarray*}
(v,w)_{\T_h} &=&\sum_{T\in\T_h}(v,w)_T=\sum_{T\in\T_h}\int_T vw d\bx,\\
 \l v,w\r_{\partial\T_h}&=&\sum_{T\in\T_h} \l v,w\r_\pT=\sum_{T\in\T_h} \int_\pT vw ds.
\end{eqnarray*}

\section{Existence and Uniqueness}
In this section, we will investigate the well posedness of the WG method.
The space $H(div;\Omega)$ is defined as 
\[
H(div; \Omega)=\left\{ \bv\in [L^2(\Omega)]^d:\; \nabla\cdot\bv \in L^2(\Omega)\right\}.
\]
For any $T\in\T_h$, it can be divided in to a set of disjoint triangles/tetrahedrons $T_i$ with $T=\cup T_i$.  Then we define $\Lambda_h(T)$ for the approximation of weak gradient on element $T$  as  
\begin{eqnarray}
\Lambda_{k}(T)=\{\bv\in H(div,T):&&\ \bv|_{T_i}\in [P_{k+1}(T_i)]^d,\;\;\nabla\cdot\bv\in P_k(T).\label{lambda}\\
&&\bv\cdot\bn|_e\in P_{k+1}(e),\;e\subset\pT\},\nonumber
\end{eqnarray}

\begin{theorem}\label{zhang}
For $\tau\in H(div,\Omega)$, there exists a projection $\Pi_h$ with $\Pi_h\tau\in H(div,\Omega)$ satisfying $\Pi_h\tau|_T\in \Lambda_k(T)$ and
\begin{eqnarray}
(\nabla\cdot\tau,\;v_0)_T&=&(\nabla\cdot\Pi_h\tau,\;v_0)_T, \label{key2}\\
-(\nabla\cdot\tau, \;v_0)_{\T_h}&=&(\Pi_h\tau, \;\nabla_w v)_{\T_h},\label{key1}\\
\|\Pi_h\tau-\tau\|&\le& Ch^{k+2}|\tau|_{k+2}.\label{key3}
\end{eqnarray}
\end{theorem}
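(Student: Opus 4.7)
My plan is to construct $\Pi_h$ element by element. On each polytopal element $T$ with its simplicial subdivision $\{T_i\}$, I would build $\Pi_h\tau$ as a BDM-type interpolant of degree $k+1$ on the refined simplicial mesh. To ensure $\Pi_h\tau\in\Lambda_k(T)$, the degrees of freedom must (i) enforce $H(\div)$-continuity across the internal faces of the subdivision, (ii) match the $L^2$-projection of $\tau\cdot\bn$ onto $P_{k+1}(e)$ on each boundary face $e\subset\partial T$, and (iii) be chosen so that the resulting divergence lies in $P_k(T)$ globally, not merely in $P_k(T_i)$ on each subsimplex. Concretely, I would impose face moments against $P_{k+1}$ on every face of the subdivision together with appropriate $P_k$ interior moments on each $T_i$, then correct $\Pi_h\tau$ by a divergence-free bubble supported in the interior of $T$ so that its global divergence equals the $L^2$-projection $Q_k(\div\tau)$ onto $P_k(T)$.

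For property (\ref{key2}), once $\div\Pi_h\tau=Q_k(\div\tau)$ is secured by this degree-of-freedom choice, the identity is immediate from the defining property of $Q_k$ and the fact that $v_0\in P_k(T)$. For property (\ref{key1}), I would test the weak gradient definition (\ref{d-d}) with $\bq=\Pi_h\tau$, which is legitimate since $\Pi_h\tau|_T\in\Lambda_k(T)$. This yields
\[
(\Pi_h\tau,\nabla_w v)_T = -(v_0,\div\Pi_h\tau)_T + \langle v_b,\Pi_h\tau\cdot\bn\rangle_{\partial T}.
\]
Using (\ref{key2}) to replace $\div\Pi_h\tau$ by $\div\tau$ in the volume term and summing over $T\in\T_h$, the boundary contributions on interior edges/faces cancel because $v_b$ is single-valued there while $\Pi_h\tau\in H(\div,\Omega)$ has continuous normal trace, and those on $\partial\Omega$ vanish from the boundary condition on $v_b$ implicit in the WG setting.

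For the error estimate (\ref{key3}), since $\Pi_h\tau$ is built from a degree $k+1$ BDM-type interpolant on each subsimplex $T_i$, a standard Bramble--Hilbert argument on shape-regular simplices gives $\|\Pi_h\tau-\tau\|_{L^2(T_i)}\le Ch^{k+2}|\tau|_{H^{k+2}(T_i)}$. Squaring, summing over $T_i\subset T$, and then over $T\in\T_h$ yields the global bound.

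The hard part will be the construction step itself: verifying that the constrained space $\Lambda_k(T)$ is large enough to admit a projection realizing all three properties simultaneously, and in particular establishing unisolvence of a chosen set of degrees of freedom once the \emph{global} divergence constraint $\div\bv\in P_k(T)$ -- strictly stronger than the piecewise $\div\bv|_{T_i}\in P_k(T_i)$ one automatically obtains from BDM elements -- is imposed. Doing this on an arbitrary polytope $T$ with an unstructured simplicial subdivision, while keeping the normal-trace condition $\bv\cdot\bn|_e\in P_{k+1}(e)$ compatible with $H(\div)$-conformity across the interior faces of the subdivision, is where the bulk of the technical work lies.
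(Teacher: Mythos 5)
Your plan follows the same route as the paper (a BDM-type interpolant of degree $k+1$ on the simplicial subdivision of each $T$, with face moments enforcing $H(\div)$-conformity and normal-trace matching, interior moments, and a constraint forcing the divergence into $P_k(T)$ globally; then \eqref{key2} from the divergence property, \eqref{key1} by testing \eqref{d-d} with $\bq=\Pi_h\tau$ and cancelling the $\langle v_b,\Pi_h\tau\cdot\bn\rangle$ terms, and \eqref{key3} by a local approximation argument). But the proposal stops exactly where the proof begins. The entire content of the paper's Section 5 is the explicit list of degrees of freedom (outer-face moments, single-polynomial constraints on subdivided outer faces, interior-face moments and jump conditions, interior moments against $\nabla P_{k-1}(T)\setminus P_0$ and against a curl-polynomial space $CP(T_i)$, and the divergence-matching constraints between sub-simplices), the count showing the system is square, and the unisolvence argument for the homogeneous system (normal trace vanishes on $\partial T$, the divergence is a single polynomial and is killed by integration by parts against $\nabla p_{k-1}$, hence $\Pi_h\bv|_{T_i}\in CP(T_i)$, hence $\Pi_h\bv=0$). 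You name this as ``the hard part'' and leave it undone, so the statement is not actually proved: without a verified unisolvent set of DOFs you cannot assert that a projection with all three properties exists, and the dimension bookkeeping on an arbitrary polytope with $n$ sub-simplices and subdivided outer faces is genuinely delicate.

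Two further concrete problems. First, your proposed correction ``by a divergence-free bubble supported in the interior of $T$ so that its global divergence equals $Q_k(\div\tau)$'' is self-contradictory: adding a divergence-free field cannot change the divergence. What you need is a correction with vanishing normal trace on $\partial T$ whose divergence equals the (zero-mean) difference between the piecewise and global projections of $\div\tau$; this exists, but it is not divergence-free, and you must also check that it can be chosen without destroying the interior moment conditions used elsewhere. Second, the error bound $\|\Pi_h\tau-\tau\|_{L^2(T_i)}\le Ch^{k+2}|\tau|_{H^{k+2}(T_i)}$ ``on each subsimplex'' does not follow from a per-simplex Bramble--Hilbert argument here, because the DOFs couple the sub-simplices (the single-polynomial face constraints, the interface jump conditions, and the global divergence constraint), so $\Pi_h\tau|_{T_i}$ depends on $\tau$ over all of $T$. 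The estimate has to be run at the level of the whole element $T$, as the paper does: $L^2$-stability of $\Pi_h$ on $T$ by finite-dimensional norm equivalence and scaling, preservation of $[P_{k+1}(T)]^d$, and then the Taylor polynomial of $\tau$ on $T$.
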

We will prove this important theorem in Section \ref{proof}.

\smallskip

Let $Q_0$ be the element-wise defined $L^2$ projection onto $P_k(T)$ on each $T\in\T_h$. Similarly let $Q_b$ be the $L^2$ projection onto $P_{k+1}(e)$ with $e\subset\partial T$. Let $\Q_h$ be the element-wise defined $L^2$ projection onto $\Lambda_k(T)$ on each element $T$. Finally we define $Q_hu=\{Q_0u,Q_bu\}\in V_h$.

\smallskip

\begin{lemma}
Let $\phi\in H^1(\Omega)$ and $v\in V_h^0$, then we have,
\begin{eqnarray}
\nabla_w Q_h\phi& =&\Q_h\nabla\phi.\label{key0}
%\nabla_w\phi &=&\Q_h\nabla\phi.\label{key}
\end{eqnarray}
\end{lemma}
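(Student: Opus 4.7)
The proof will follow directly from the definition of $\nabla_w$, the defining projection properties of $Q_0$, $Q_b$, $\bQ_h$, and Green's formula. The key observation is that the test space $\Lambda_k(T)$ has been tailor-made so that $\nabla\cdot\bq \in P_k(T)$ and $\bq\cdot\bn|_e \in P_{k+1}(e)$, which exactly match the polynomial degrees used in the $L^2$ projections $Q_0$ and $Q_b$.

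Concretely, I would start from the weak gradient identity applied to $v = Q_h\phi$: for each $T \in \T_h$ and each $\bq \in \Lambda_k(T)$,
\[
  (\nabla_w Q_h\phi,\bq)_T = -(Q_0\phi,\nabla\cdot\bq)_T + \langle Q_b\phi, \bq\cdot\bn\rangle_{\partial T}.
\]
Since $\nabla\cdot\bq\in P_k(T)$, the projection property of $Q_0$ gives $(Q_0\phi,\nabla\cdot\bq)_T = (\phi,\nabla\cdot\bq)_T$, and since $\bq\cdot\bn|_e\in P_{k+1}(e)$, the projection property of $Q_b$ gives $\langle Q_b\phi, \bq\cdot\bn\rangle_{\partial T} = \langle \phi,\bq\cdot\bn\rangle_{\partial T}$. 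Because $\bq\in H(\mathrm{div},T)$ and $\phi\in H^1(T)$, the standard Green's formula applies, yielding
\[
  -(\phi,\nabla\cdot\bq)_T + \langle \phi,\bq\cdot\bn\rangle_{\partial T} = (\nabla\phi,\bq)_T.
\]
Finally, by definition of the $L^2$ projection $\bQ_h$ onto $\Lambda_k(T)$, the right-hand side equals $(\bQ_h\nabla\phi,\bq)_T$.

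Combining these identities, $(\nabla_w Q_h\phi - \bQ_h\nabla\phi,\bq)_T = 0$ for every $\bq\in\Lambda_k(T)$. Since $\nabla_w Q_h\phi - \bQ_h\nabla\phi$ itself lies in $\Lambda_k(T)$, choosing $\bq$ equal to this difference forces it to vanish on each $T$, giving (\ref{key0}).

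I do not anticipate any real obstacle: the statement is essentially a commutativity-of-projections identity, and the whole point of the definition of $\Lambda_k(T)$ in (\ref{lambda}) (with $\nabla\cdot\bq \in P_k(T)$ and $\bq\cdot\bn|_e\in P_{k+1}(e)$) is to make this diagram commute. The only minor care needed is justifying Green's formula on a general polytope $T$ for $\bq \in H(\mathrm{div},T)$, which is standard, together with noting that $\bq\cdot\bn$ is single-valued across the interior triangular faces used in the subdivision of $T$ because $\bq \in H(\mathrm{div},T)$.
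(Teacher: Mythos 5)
Your proof is correct and follows essentially the same route as the paper: apply the definition (\ref{d-d}) to $Q_h\phi$, drop the projections $Q_0$ and $Q_b$ using the facts $\nabla\cdot\bq\in P_k(T)$ and $\bq\cdot\bn|_e\in P_{k+1}(e)$, integrate by parts, and invoke the definition of $\Q_h$. Your added remarks — justifying Green's formula on the subdivided polytope via single-valuedness of $\bq\cdot\bn$ for $\bq\in H(\mathrm{div},T)$, and the final step that an element of $\Lambda_k(T)$ orthogonal to all of $\Lambda_k(T)$ vanishes — are details the paper leaves implicit.
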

\begin{proof}
It follows from (\ref{d-d}), the definition of $\Lambda_k(T)$ and  integration by parts that for
any $\bq\in \Lambda_k(T)$
\begin{eqnarray*}
(\nabla_w Q_h\phi,\bq)_T &=& -(Q_0\phi,\nabla\cdot\bq)_T
+\langle Q_b\phi,\bq\cdot\bn\rangle_{\pT}\\
&=& -(\phi,\nabla\cdot\bq)_T
+\langle \phi,\bq\cdot\bn\rangle_{\pT}\\
&=&(\nabla \phi,\bq)_T\\
&=&(\Q_h\nabla\phi,\bq)_T.
\end{eqnarray*}
Thus we have proved the identity (\ref{key0}).
\end{proof}

We define a semi-norm $\3bar\cdot\3bar$ for any $v\in V_h$ as
\begin{equation}\label{3barnorm}
\3bar v\3bar^2=(\nabla_wv,\nabla_wv)_{\T_h}.
\end{equation}
We define another semi-norm as
\begin{equation}\label{norm}
\|v\|_{1,h} = \left( \sum_{T\in\T_h}\left(\|\nabla
v_0\|_T^2+h_T^{-1} \|  v_0-v_b\|^2_\pT\right) \right)^{\frac12}.
\end{equation}
Obviously,  $\|v\|_{1,h}$ define a norm for  $v\in V_h^0$. We prove the equivalence of $\3bar\cdot\3bar$ and $\|\cdot\|_{1,h}$ in the following lemma.

\smallskip

\begin{lemma}\label{lemma11} 
There exist two positive constants $C_1$ and $C_2$ so
that the following inequalities hold true for any $v=\{v_0,v_b\}\in V_h$, 
\begin{equation}\label{happy}
C_1 \|v\|_{1,h}\le \3bar v\3bar \leq C_2 \|v\|_{1,h}.
\end{equation}
\end{lemma}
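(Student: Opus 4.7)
The plan is to establish the two inequalities separately, each by choosing a strategic test function $\bq \in \Lambda_k(T)$ in the defining relation (\ref{d-d}) and then integrating by parts. The whole argument hinges on three ingredients: (i) the defining identity (\ref{d-d}), (ii) classical inverse/trace inequalities for polynomials on the (shape-regular) sub-simplices $T_i$, and (iii) a lifting lemma stating that for any boundary data $g \in \prod_{e\subset\partial T} P_{k+1}(e)$ there exists $\bq \in \Lambda_k(T)$ with $\bq\cdot\bn|_{\partial T}=g$ and $\|\bq\|_T \le C h_T^{1/2}\|g\|_{\partial T}$. This last ingredient is the main technical obstacle; it can be obtained by a BDM-type construction on the sub-triangulation of $T$, using shape regularity of both $\T_h$ and its simplicial refinement.

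\textbf{Upper bound.} To prove $\3bar v\3bar \le C_2\|v\|_{1,h}$, I would choose $\bq=\nabla_w v|_T \in \Lambda_k(T)$ in (\ref{d-d}). Because $\nabla_w v \in H(\div,T)$, integration by parts on $T$ yields
\begin{equation*}
(\nabla_w v,\nabla_w v)_T=(\nabla v_0,\nabla_w v)_T-\langle v_0-v_b,\nabla_w v\cdot\bn\rangle_{\partial T}.
\end{equation*}
Applying Cauchy--Schwarz and the inverse trace estimate $\|\nabla_w v\cdot\bn\|_{\partial T}\le C h_T^{-1/2}\|\nabla_w v\|_T$ (valid because $\nabla_w v\cdot\bn|_e\in P_{k+1}(e)$ and $\nabla_w v|_{T_i}$ is polynomial), then summing over $T\in\T_h$, produces the desired estimate.

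\textbf{Lower bound, gradient part.} To estimate $\|\nabla v_0\|_T$, observe that $\nabla v_0 \in \Lambda_k(T)$: indeed, $\nabla v_0\in[P_{k-1}(T)]^d$, $\div(\nabla v_0)=\Delta v_0\in P_{k-2}(T)\subset P_k(T)$, and $\nabla v_0\cdot\bn|_e\in P_{k-1}(e)\subset P_{k+1}(e)$. Plugging $\bq=\nabla v_0$ into (\ref{d-d}) and integrating by parts gives
\begin{equation*}
\|\nabla v_0\|_T^2=(\nabla_w v,\nabla v_0)_T+\langle v_0-v_b,\nabla v_0\cdot\bn\rangle_{\partial T},
\end{equation*}
and Cauchy--Schwarz combined with the standard polynomial trace inequality $\|\nabla v_0\cdot\bn\|_{\partial T}\le C h_T^{-1/2}\|\nabla v_0\|_T$ yields $\|\nabla v_0\|_T\le \|\nabla_w v\|_T + C h_T^{-1/2}\|v_0-v_b\|_{\partial T}$.

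\textbf{Lower bound, jump part.} The remaining task is to bound $h_T^{-1/2}\|v_0-v_b\|_{\partial T}$ by $\|\nabla_w v\|_T$ (plus the previously controlled $\|\nabla v_0\|_T$). Here I invoke the lifting lemma to choose $\bq\in\Lambda_k(T)$ with $\bq\cdot\bn|_{\partial T}=v_b-v_0$ and $\|\bq\|_T\le C h_T^{1/2}\|v_0-v_b\|_{\partial T}$. Inserting this $\bq$ into (\ref{d-d}) and using integration by parts,
\begin{equation*}
(\nabla_w v,\bq)_T=(\nabla v_0,\bq)_T+\|v_0-v_b\|_{\partial T}^2,
\end{equation*}
so $\|v_0-v_b\|_{\partial T}^2\le(\|\nabla_w v\|_T+\|\nabla v_0\|_T)\|\bq\|_T$. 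Using the lifting bound, this gives $h_T^{-1}\|v_0-v_b\|_{\partial T}^2 \le C(\|\nabla_w v\|_T^2+\|\nabla v_0\|_T^2)$, which together with the previous step (absorbing $\|\nabla v_0\|_T^2$ via Young's inequality and summing over $T$) completes the proof. I expect the lifting lemma to be the main obstacle, since it depends essentially on the richness of $\Lambda_k(T)$ and the shape regularity of the polytopal element and its simplicial subdivision; everything else is routine integration by parts and Cauchy--Schwarz.
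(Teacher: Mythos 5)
The paper does not prove this lemma (it defers to the argument in \cite{sf-wg-part-II}), and your overall strategy --- upper bound by testing \eqref{d-d} with $\bq=\nabla_w v$, lower bound by testing with $\bq=\nabla v_0$ and with a normal-trace lifting of the jump --- is exactly the standard route used there. Your upper bound and your verification that $\nabla v_0\in\Lambda_k(T)$ are correct. However, the way you close the lower bound has a genuine logical gap: your two estimates are
$\|\nabla v_0\|_T\le \|\nabla_w v\|_T+C_1 h_T^{-1/2}\|v_0-v_b\|_{\partial T}$ and
$h_T^{-1/2}\|v_0-v_b\|_{\partial T}\le C_2\bigl(\|\nabla_w v\|_T+\|\nabla v_0\|_T\bigr)$,
and substituting one into the other gives $b\le 2C_2 c + C_1C_2\,b$ (with $a=\|\nabla v_0\|_T$, $b=h_T^{-1/2}\|v_0-v_b\|_{\partial T}$, $c=\|\nabla_w v\|_T$). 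This only closes if $C_1C_2<1$, which is not available: $C_1$ is a trace/inverse constant and $C_2$ a lifting constant, both generically larger than one. Young's inequality does not rescue this --- absorption requires the coefficient of the absorbed term to be less than one, and here it is $O(C_1^2C_2^2)$. In the degenerate scenario $c=0$ your two inequalities are consistent with $a,b$ both nonzero, so they genuinely do not imply the lower bound.

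The standard fix, and the one used in \cite{sf-wg} and \cite{sf-wg-part-II}, is to strengthen the lifting lemma: construct $\bq\in\Lambda_k(T)$ with $\bq\cdot\bn|_{\partial T}=v_b-v_0$, $\|\bq\|_T\le Ch_T^{1/2}\|v_0-v_b\|_{\partial T}$, \emph{and additionally} $(\bq,\nabla w)_T=0$ for all $w\in P_k(T)$. A BDM-type construction of degree $k+1$ on the sub-triangulation provides this for free, since the interior moments are degrees of freedom independent of the normal-trace ones and can be set to zero. Then \eqref{d-d} gives directly
$\|v_0-v_b\|_{\partial T}^2=(\nabla_w v,\bq)_T\le Ch_T^{1/2}\|\nabla_w v\|_T\,\|v_0-v_b\|_{\partial T}$,
so $h_T^{-1/2}\|v_0-v_b\|_{\partial T}\le C\|\nabla_w v\|_T$ with no reference to $\|\nabla v_0\|_T$; the gradient estimate then follows from your second step. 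Two further points deserve a sentence in a complete write-up: the lifting must be built \emph{inside} $\Lambda_k(T)$, i.e., respecting the one-piece constraints $\nabla\cdot\bq\in P_k(T)$ and $\bq\cdot\bn|_e\in P_{k+1}(e)$ on subdivided faces (the unisolvency argument of Section 5 supplies the needed flexibility, but its boundary degrees of freedom only control moments against $P_k(e)$, so surjectivity onto $P_{k+1}(e)$ traces needs to be checked separately); and the target trace $v_b-v_0|_e$ does lie in $P_{k+1}(e)$, as you implicitly use.
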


\begin{lemma}\label{lemma22}
The weak Galerkin finite element scheme (\ref{wg}) has a unique
solution.
\end{lemma}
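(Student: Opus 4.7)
The statement is a standard finite-dimensional existence/uniqueness result for a square linear system, so the plan is to reduce it to uniqueness: if $f=0$ then $u_h=0$. Since the scheme (\ref{wg}) defines a square linear system on the finite-dimensional space $V_h^0$, showing that the homogeneous problem admits only the trivial solution immediately yields both existence and uniqueness.

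My plan is as follows. First, assume $f\equiv 0$ and take the test function $v=u_h$ in (\ref{wg}). This gives $(\nabla_w u_h,\nabla_w u_h)_{\T_h}=0$, i.e., $\3bar u_h\3bar=0$. Second, invoke the norm equivalence of Lemma \ref{lemma11} (specifically the lower bound $C_1\|u_h\|_{1,h}\le\3bar u_h\3bar$) to conclude $\|u_h\|_{1,h}=0$. By the definition (\ref{norm}) of $\|\cdot\|_{1,h}$, this forces $\nabla u_0=0$ on every $T\in\T_h$ and $u_0=u_b$ on every $\partial T$.

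Third, I would conclude that $u_0$ is piecewise constant and, thanks to $u_0=u_b$ across each edge/face, that all these constants must agree across shared edges (since $u_b$ is single-valued on $\E_h$). Hence $u_0$ is globally constant on each connected component of $\Omega$. Combining with the boundary condition $u_b=0$ on $\partial\Omega$ (built into $V_h^0$), we see that on any element touching $\partial\Omega$ the constant value of $u_0$ equals $0$; propagating through connected components of $\T_h$, this gives $u_0\equiv 0$ on $\Omega$, and then $u_b=u_0|_{\partial T}=0$ on all interior edges as well. Thus $u_h=\{u_0,u_b\}=0$.

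The only mildly nontrivial step is the propagation argument in the third paragraph, which tacitly uses that $\T_h$ is a connected partition of the connected domain $\Omega$ so the element-wise constants $u_0$ must coincide. Everything else is a direct application of Lemma \ref{lemma11} and standard linear algebra, so I do not anticipate any real obstacle — the lemma is essentially a corollary of the equivalence of seminorms already established.
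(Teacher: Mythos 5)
Your proof is correct and is exactly the standard argument: the paper itself omits the proof, deferring to the Part II reference, but the argument there is precisely your reduction to the homogeneous system, testing with $v=u_h$, invoking the norm equivalence of Lemma \ref{lemma11}, and propagating the piecewise constants to zero via the single-valuedness of $u_b$ and the boundary condition. No gaps.
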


\medskip

The proofs for Lemma \ref{lemma11} and Lemma \ref{lemma22} are similar to the ones in \cite{sf-wg-part-II}.

\section{Error Estimates}

In this section, we will derive superconvergence for the WG finite element approximation $u_h$ in both an energy norm and the $L^2$ norm.

\subsection{Error Estimates in Energy Norm}
We start this subsection by deriving an error equation that $\epsilon_h=Q_hu-u_h$ satisfies. First we define
\begin{eqnarray}
\ell(u,v)&=&(\bbQ_h\nabla u-\Pi_h\nabla u, \nabla_w v)_{\T_h}.\label{nnn}
\end{eqnarray}

\begin{lemma}
Let $\ell(u,v)$ defined in (\ref{nnn}). Then we have
\begin{eqnarray}
(\nabla_w \epsilon_h,\nabla_wv)=\ell(u,v)\quad\forall v\in V_h^0.\label{ee}
\end{eqnarray}
\end{lemma}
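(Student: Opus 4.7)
The plan is to derive the error equation by expressing $(\nabla_w Q_h u, \nabla_w v)_{\T_h}$ in a form that can be compared directly with the discrete scheme (\ref{wg}). First I would apply the commuting identity (\ref{key0}) to rewrite $\nabla_w Q_h u = \bbQ_h \nabla u$, and then split the resulting term by adding and subtracting $\Pi_h \nabla u$:
\begin{equation*}
(\nabla_w Q_h u, \nabla_w v)_{\T_h} = (\bbQ_h \nabla u - \Pi_h \nabla u, \nabla_w v)_{\T_h} + (\Pi_h \nabla u, \nabla_w v)_{\T_h}.
\end{equation*}
The first piece is exactly $\ell(u,v)$ by definition (\ref{nnn}), so the task reduces to identifying the second piece with the load term $(f, v_0)$.

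Next I would invoke Theorem \ref{zhang}, specifically the key identity (\ref{key1}) with the choice $\tau = \nabla u$, which is legitimate since $\nabla u \in H(\mathrm{div};\Omega)$ by regularity of $u$. This yields
\begin{equation*}
(\Pi_h \nabla u, \nabla_w v)_{\T_h} = -(\nabla \cdot \nabla u, v_0)_{\T_h} = -(\Delta u, v_0)_{\T_h} = (f, v_0),
\end{equation*}
where the last equality uses the PDE (\ref{pde}). Combining the two pieces gives $(\nabla_w Q_h u, \nabla_w v)_{\T_h} = \ell(u,v) + (f, v_0)$.

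Finally I would subtract the WG scheme (\ref{wg}), namely $(\nabla_w u_h, \nabla_w v) = (f, v_0)$, from the identity just obtained. Since $\epsilon_h = Q_h u - u_h$ and the weak gradient is linear, this cancels the $(f,v_0)$ terms and leaves $(\nabla_w \epsilon_h, \nabla_w v)_{\T_h} = \ell(u,v)$, which is the desired error equation.

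There is really no significant obstacle here; the whole proof is a direct chaining of the projection identities (\ref{key0}) and (\ref{key1}) together with the PDE and the discrete scheme. The only point that requires a bit of care is making sure the test space $V_h^0$ permits (\ref{key1}) to be applied without any boundary residue, which is immediate because the boundary contributions from $v_b$ in the definition of $\nabla_w$ vanish on $\partial \Omega$ when $v \in V_h^0$, and Theorem \ref{zhang} packages the interior jump cancellations into (\ref{key1}) globally on $\T_h$.
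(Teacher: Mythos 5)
Your proof is correct and follows essentially the same route as the paper: it combines the commuting identity (\ref{key0}) with the projection identity (\ref{key1}) applied to $\tau=\nabla u$, uses the PDE to identify $(\Pi_h\nabla u,\nabla_w v)_{\T_h}$ with $(f,v_0)$, and subtracts the scheme (\ref{wg}); the only difference is the (immaterial) order in which the add-and-subtract decomposition and the identification of the load term are performed.
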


\begin{proof}
For $v=\{v_0,v_b\}\in V_h^0$, testing (\ref{pde}) by  $v_0$  and using (\ref{key1}),  we have
\begin{equation}\label{m1}
(f, v_0)=-(\nabla\cdot\nabla u, v_0)=(\Pi_h\nabla u, \nabla_w v)_{\T_h}.
\end{equation}
It follows from (\ref{key0}) and (\ref{m1})
\begin{equation}\label{j2}
(\nabla_wQ_h u, \nabla_w v)=(f, v_0)+\ell(u,v).
\end{equation}
Subtracting (\ref{wg}) from (\ref{j2}) gives the error equation,
\begin{eqnarray*}
(\nabla_w\epsilon_h,\nabla_wv)=\ell(u,v)\quad \forall v\in V_h^0.
\end{eqnarray*}
This completes the proof of the lemma.
\end{proof}

For any function $\varphi\in H^1(T)$, the following trace
inequality holds true (see \cite{wymix} for details):
\begin{equation}\label{trace}
\|\varphi\|_{e}^2 \leq C \left( h_T^{-1} \|\varphi\|_T^2 + h_T
\|\nabla \varphi\|_{T}^2\right).
\end{equation}

\begin{theorem} Let $u_h\in V_h$ and $u\in H^{k+3}(\Omega)$ be the solutions of (\ref{wg}) and (\ref{pde}), respectively.  Then we have
\begin{equation}\label{err1}
\3bar Q_hu-u_h\3bar \le Ch^{k+2}|u|_{k+3}.
\end{equation}
\end{theorem}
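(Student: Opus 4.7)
The plan is to exploit the error equation (\ref{ee}) in the standard energy-argument way. First I would test (\ref{ee}) with $v=\epsilon_h=Q_h u-u_h\in V_h^0$ to obtain
\begin{equation*}
\3bar\epsilon_h\3bar^2=(\bbQ_h\nabla u-\Pi_h\nabla u,\,\nabla_w\epsilon_h)_{\T_h}.
\end{equation*}
An application of the Cauchy--Schwarz inequality on each element, followed by the definition (\ref{3barnorm}) of $\3bar\cdot\3bar$, then yields
\begin{equation*}
\3bar\epsilon_h\3bar\le\|\bbQ_h\nabla u-\Pi_h\nabla u\|.
\end{equation*}
So the whole problem reduces to bounding this single $L^2$ norm by $Ch^{k+2}|u|_{k+3}$.

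For that, I would insert $\pm\nabla u$ and invoke the triangle inequality,
\begin{equation*}
\|\bbQ_h\nabla u-\Pi_h\nabla u\|\le\|\bbQ_h\nabla u-\nabla u\|+\|\nabla u-\Pi_h\nabla u\|.
\end{equation*}
The second term is handled immediately by the projection estimate (\ref{key3}) of Theorem \ref{zhang}, with $\tau=\nabla u\in [H^{k+2}(\Omega)]^d$, giving $Ch^{k+2}|u|_{k+3}$. For the first term, I would use that $\bbQ_h$ is the element-wise $L^2$ projection onto $\Lambda_k(T)$, and that by definition (\ref{lambda}) the space $\Lambda_k(T)$ contains the piecewise (relative to the sub-simplicial partition $\{T_i\}$) vector polynomials of degree $k+1$. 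The best-approximation property of the $L^2$ projection therefore bounds $\|\bbQ_h\nabla u-\nabla u\|_T$ by the error of approximating $\nabla u$ by piecewise $P_{k+1}$ polynomials on each shape-regular sub-triangulation, which is of order $h^{k+2}|u|_{k+3,T}$ by standard polynomial approximation theory; summing over $T\in\T_h$ gives the claimed bound.

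Combining the two ingredients yields $\3bar\epsilon_h\3bar\le Ch^{k+2}|u|_{k+3}$, which is (\ref{err1}). The only non-routine step is the approximation estimate for $\bbQ_h$: one must confirm that the $H(\operatorname{div})$-conformity and the constraint $\nabla\cdot\bv\in P_k(T)$ in the definition of $\Lambda_k(T)$ do not obstruct the best approximation on each sub-element, but this is automatic because we are using the plain $L^2$ projection and $\Lambda_k(T)$ genuinely contains the relevant piecewise polynomial space. Everything else is a direct application of previously established results (the error equation and Theorem \ref{zhang}).
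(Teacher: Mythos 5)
Your proof follows the paper's argument exactly: test (\ref{ee}) with $v=\epsilon_h$, apply Cauchy--Schwarz to get $\3bar\epsilon_h\3bar\le\|\bbQ_h\nabla u-\Pi_h\nabla u\|$, insert $\pm\nabla u$, and bound the two pieces by the approximation properties of $\bbQ_h$ and of $\Pi_h$ via (\ref{key3}). One small correction to your justification of the $\bbQ_h$ term: $\Lambda_k(T)$ does \emph{not} contain all piecewise $[P_{k+1}(T_i)]^d$ functions (a general such function fails the $H(\div,T)$-conformity, the one-piece divergence condition $\nabla\cdot\bv\in P_k(T)$, and the requirement $\bv\cdot\bn|_e\in P_{k+1}(e)$ on subdivided boundary faces), but it does contain the one-piece space $[P_{k+1}(T)]^d$, and that containment already gives the $O(h^{k+2}|u|_{k+3})$ best-approximation bound for $\|\bbQ_h\nabla u-\nabla u\|$ that your argument needs.
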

\begin{proof}
Letting $v=\epsilon_h$ in (\ref{ee}), we arrive at
\begin{eqnarray}
\3bar \epsilon_h\3bar^2&=&\ell(u,\epsilon_h).\label{eee1}
\end{eqnarray}
The definitions of $\bbQ_h$ and $\Pi_h$ yield
\begin{eqnarray}
|\ell(u,\epsilon_h)|&=&|(\bbQ_h\nabla u-\Pi_h\nabla u, \nabla_w \epsilon_h)_{\T_h}|\nonumber\\
&\le&(\sum_T\|\bbQ_h\nabla u-\Pi_h\nabla u\|_T)^{1/2}\3bar\epsilon_h\3bar\nonumber\\
&\le&(\sum_T\|\bbQ_h\nabla u-\nabla u+\nabla u-\Pi_h\nabla u\|_T)^{1/2}\3bar\epsilon_h\3bar\nonumber\\
&\le& Ch^{k+2}|u|_{k+3}\3bar \epsilon_h\3bar.\label{eee3}
\end{eqnarray}
It follows (\ref{eee1}) and (\ref{eee3}), 
\[
\3bar \epsilon_h\3bar \le Ch^{k+2}|u|_{k+3}.
\]
The proof of the theorem is completed.
\end{proof}

\subsection{Error Estimates in $L^2$ Norm}
First notice $\epsilon_h=\{\epsilon_0,\epsilon_b\}=Q_hu-u_h$.
We use standard duality argument to derive $L^2$ error estimate.
The dual problem is finding $\phi\in H_0^1(\Omega)$ such that 
\begin{eqnarray}
-\Delta\phi&=& \epsilon_0\quad
\mbox{in}\;\Omega,\label{dual}
%w&=&0\quad \mbox{on}\; \partial\Omega,\label{dual-BC}
\end{eqnarray}
and the following $H^{2}$ regularity holds
\begin{equation}\label{reg}
\|\phi\|_2\le C\|\epsilon_0\|.
\end{equation}

\begin{lemma}
The following equation holds true for any $v\in V_h^0$,
\begin{eqnarray}
(\nabla_w Q_h\phi,\nabla_wv)_{\T_h}=(\epsilon_0,v_0)+\ell_1(\phi,v),\label{ee1}
\end{eqnarray}
where
\begin{eqnarray*}
\ell_1(\phi,v)&=& \langle (\nabla \phi-\Q_h\nabla \phi)\cdot\bn,v_0-v_b\rangle_{\partial\T_h}.
%\ell_2(u,v)&=& (a\nabla_wQ_h u-a\Q_h\nabla u,\nabla_wv)_{\T_h}.
\end{eqnarray*}
\end{lemma}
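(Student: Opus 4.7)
The plan is to massage the left-hand side into the right-hand side by, in order, invoking the commuting property (\ref{key0}), using the weak gradient definition (\ref{d-d}) with $\bq=\Q_h\nabla\phi$, and then performing classical integration by parts on each $T$ to bring in the dual equation $-\Delta\phi=\epsilon_0$. The desired form $\ell_1(\phi,v)$ should emerge from a careful reorganization of the resulting boundary integrals.

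Concretely, I would first write $(\nabla_w Q_h\phi,\nabla_w v)_T = (\Q_h\nabla\phi,\nabla_w v)_T$ by (\ref{key0}). Since $\Q_h\nabla\phi\in\Lambda_k(T)$, it is a valid test function in (\ref{d-d}), giving
\[
(\Q_h\nabla\phi,\nabla_w v)_T = -(v_0,\nabla\cdot\Q_h\nabla\phi)_T + \langle v_b,\Q_h\nabla\phi\cdot\bn\rangle_{\partial T}.
\]
Integration by parts (valid since $v_0\in H^1(T)$ and $\Q_h\nabla\phi\in H(\mathrm{div},T)$) converts the volume term to $(\nabla v_0,\Q_h\nabla\phi)_T - \langle v_0,\Q_h\nabla\phi\cdot\bn\rangle_{\partial T}$. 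Next, because $\nabla v_0$ is a single polynomial of degree $k-1$ on $T$, it lies in $\Lambda_k(T)$, so the projection property of $\Q_h$ yields $(\nabla v_0,\Q_h\nabla\phi)_T = (\nabla v_0,\nabla\phi)_T$. A second integration by parts together with the dual equation then gives $(\nabla v_0,\nabla\phi)_T = (v_0,\epsilon_0)_T + \langle v_0,\nabla\phi\cdot\bn\rangle_{\partial T}$.

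Summing these elementwise identities over $T\in\T_h$ and regrouping edge contributions, the part of $\langle v_0,\nabla\phi\cdot\bn\rangle_{\partial T}$ that pairs with $v_b$ should cancel globally: $v_b$ is single-valued across each interior edge while $\bn$ reverses orientation, $\nabla\phi\cdot\bn$ is continuous across interior edges by the $H^2$ regularity of $\phi$, and on $\partial\Omega$ one simply has $v_b=0$. What remains is exactly $(\epsilon_0,v_0) + \langle v_0-v_b,(\nabla\phi-\Q_h\nabla\phi)\cdot\bn\rangle_{\partial\T_h}$, i.e., the stated identity.

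The main obstacle is the edge bookkeeping in this last step: one must add and subtract $v_b$ inside the $\langle v_0,\nabla\phi\cdot\bn\rangle$ sum and $\Q_h\nabla\phi\cdot\bn$ inside the counterpart sum so as to disentangle the clean $(v_0-v_b)(\nabla\phi-\Q_h\nabla\phi)\cdot\bn$ structure from a residual that depends only on $v_b$, and then argue that this residual vanishes face-by-face. Everything else is essentially mechanical once (\ref{key0}) and the fact $\nabla v_0\in\Lambda_k(T)$ are in hand.
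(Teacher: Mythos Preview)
Your proposal is correct and uses exactly the same ingredients as the paper's proof --- the identity (\ref{key0}), the weak-gradient definition (\ref{d-d}) with $\bq=\Q_h\nabla\phi$, the projection relation $(\nabla v_0,\Q_h\nabla\phi)_T=(\nabla v_0,\nabla\phi)_T$ coming from $\nabla v_0\in\Lambda_k(T)$, integration by parts, and the vanishing of $\sum_T\langle v_b,\nabla\phi\cdot\bn\rangle_{\partial T}$. The only difference is direction: the paper starts from $-(\Delta\phi,v_0)=(\epsilon_0,v_0)$ and works toward $(\nabla_w Q_h\phi,\nabla_w v)_{\T_h}$, whereas you start from $(\nabla_w Q_h\phi,\nabla_w v)_{\T_h}$ and work back to the dual equation; the algebra is the same chain read in reverse.
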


\begin{proof}
For $v=\{v_0,v_b\}\in V_h^0$, testing (\ref{dual}) by  $v_0$ gives
\begin{equation}\label{d1}
-(\Delta \phi, v_0)=(\epsilon_0,v_0).
\end{equation}
Using integration by parts and the fact that
$\sum_{T\in\T_h}\langle \nabla \phi\cdot\bn, v_b\rangle_\pT=0$,  we arrive at
\begin{equation}\label{mm1}
-(\Delta \phi, v_0)=(\nabla \phi,\nabla v_0)_{\T_h}- \langle
\nabla \phi\cdot\bn,v_0-v_b\rangle_{\partial\T_h}.
\end{equation}
It follows from integration by parts, (\ref{d-d}) and (\ref{key0})  that
\begin{eqnarray}
(\nabla \phi,\nabla v_0)_{\T_h}&=&(\Q_h\nabla  \phi,\nabla v_0)_{\T_h}\nonumber\\
&=&-(v_0,\nabla\cdot (\Q_h\nabla \phi))_{\T_h}+\langle v_0, \Q_h\nabla \phi\cdot\bn\rangle_{\partial\T_h}\nonumber\\
&=&(\Q_h\nabla \phi, \nabla_w v)_{\T_h}+\langle v_0-v_b,\Q_h\nabla \phi\cdot\bn\rangle_{\partial\T_h}\nonumber\\
&=&( \nabla_w Q_h\phi, \nabla_w v)_{\T_h}+\langle v_0-v_b,\Q_h\nabla \phi\cdot\bn\rangle_{\partial\T_h}.\label{j1}
\end{eqnarray}
Combining (\ref{mm1}) and (\ref{j1}) gives
\begin{eqnarray}
-(\Delta \phi, v_0)=(\nabla_w Q_h\phi,\nabla_w v)_{\T_h}-\ell_1(\phi,v).\label{d2}
\end{eqnarray}
Combining (\ref{d2}) and (\ref{d1}) yields
\begin{eqnarray}
(\nabla_w Q_h\phi,\nabla_w v)_{\T_h}=(\epsilon_0,v_0)+\ell_1(\phi,v).\label{d3}
\end{eqnarray}
This completes the proof of the lemma.
\end{proof}

By the same argument as (\ref{d2}), (\ref{ee}) has another form as
\begin{eqnarray}
(\nabla_w \epsilon_h,\; \nabla_w v)_{\T_h}&=&\ell_1(u,v).\label{eee}
\end{eqnarray}

\begin{theorem} Let $u_h\in V_h$ be the weak Galerkin finite element solution of (\ref{wg}). Assume that the
exact solution $u\in H^{k+3}(\Omega)$ and (\ref{reg}) holds true.
 Then, there exists a constant $C$ such that for $k\ge 1$
\begin{equation}\label{err2}
\|Q_0u-u_0\| \le Ch^{k+3}|u|_{k+3}.
\end{equation}
\end{theorem}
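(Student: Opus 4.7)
The plan is to use the standard Aubin--Nitsche duality argument. With $\phi\in H_0^1(\Omega)\cap H^2(\Omega)$ the solution of (\ref{dual})--(\ref{reg}) and $\epsilon_h=Q_hu-u_h\in V_h^0$, I test (\ref{ee1}) with $v=\epsilon_h$ to obtain
\[
\|\epsilon_0\|^2=(\nabla_w Q_h\phi,\nabla_w\epsilon_h)_{\T_h}-\ell_1(\phi,\epsilon_h).
\]
Since $\phi\in H_0^1(\Omega)$ gives $Q_h\phi\in V_h^0$, I can also test the alternate error equation (\ref{eee}) with $v=Q_h\phi$ to rewrite the first term as $\ell_1(u,Q_h\phi)$. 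Hence
\[
\|\epsilon_0\|^2=\ell_1(u,Q_h\phi)-\ell_1(\phi,\epsilon_h),
\]
and the problem reduces to bounding each of these two boundary functionals by $Ch^{k+3}|u|_{k+3}\|\epsilon_0\|$.

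For the second functional $\ell_1(\phi,\epsilon_h)=\langle(\nabla\phi-\Q_h\nabla\phi)\cdot\bn,\epsilon_0-\epsilon_b\rangle_{\partial\T_h}$, I would estimate the two factors separately by Cauchy--Schwarz. The trace inequality (\ref{trace}) together with the $H^2$-regularity and the piecewise $[P_{k+1}(T_i)]^d$ content of $\Lambda_k(T)$ gives
\[
\Big(\sum_T\|(\nabla\phi-\Q_h\nabla\phi)\cdot\bn\|_{\partial T}^2\Big)^{1/2}\le Ch^{1/2}\|\phi\|_2,
\]
while the definition of $\|\cdot\|_{1,h}$, Lemma \ref{lemma11}, and the energy estimate (\ref{err1}) yield $\sum_T\|\epsilon_0-\epsilon_b\|_{\partial T}^2\le h\|\epsilon_h\|_{1,h}^2\le Ch^{2k+5}|u|_{k+3}^2$. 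Multiplying produces the desired $Ch^{k+3}|u|_{k+3}\|\phi\|_2$ and an appeal to (\ref{reg}) replaces $\|\phi\|_2$ by $\|\epsilon_0\|$.

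For the first functional $\ell_1(u,Q_h\phi)=\langle(\nabla u-\Q_h\nabla u)\cdot\bn,Q_0\phi-Q_b\phi\rangle_{\partial\T_h}$, I would note that since $Q_0\phi|_e\in P_k(e)\subset P_{k+1}(e)$ is fixed by $Q_b$, we have $Q_0\phi-Q_b\phi=Q_b(Q_0\phi-\phi)$ on each $e$, so by (\ref{trace}) and approximation
\[
\Big(\sum_T\|Q_0\phi-Q_b\phi\|_{\partial T}^2\Big)^{1/2}\le Ch^{3/2}\|\phi\|_2.
\]
The trickier factor, and the main technical obstacle, is $\|(\nabla u-\Q_h\nabla u)\cdot\bn\|_{\partial T}$; this is where the two-order superconvergence is really earned. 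Combining the trace inequality with the $L^2$ approximation bound $\|\nabla u-\Q_h\nabla u\|_T\le Ch^{k+2}|u|_{k+3,T}$ and an inverse-estimate comparison with a piecewise polynomial interpolant in $\Lambda_k(T)$ to control $|\nabla u-\Q_h\nabla u|_{1,T}$ by $Ch^{k+1}|u|_{k+3,T}$, I expect to obtain
\[
\Big(\sum_T\|(\nabla u-\Q_h\nabla u)\cdot\bn\|_{\partial T}^2\Big)^{1/2}\le Ch^{k+3/2}|u|_{k+3}.
\]
Multiplying the two factors and invoking (\ref{reg}) gives $|\ell_1(u,Q_h\phi)|\le Ch^{k+3}|u|_{k+3}\|\epsilon_0\|$. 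Adding the two bounds and dividing by $\|\epsilon_0\|$ yields (\ref{err2}). The restriction $k\ge 1$ is used precisely to guarantee that $Q_0\phi$ sits inside $P_{k+1}$ on edges, securing the extra half power of $h$ on the dual side.
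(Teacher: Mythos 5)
Your proposal is essentially the paper's own proof: the same duality argument, the same identity $\|\epsilon_0\|^2=\ell_1(u,Q_h\phi)-\ell_1(\phi,\epsilon_h)$ obtained by testing (\ref{ee1}) with $v=\epsilon_h$ and (\ref{eee}) with $v=Q_h\phi$, and the same Cauchy--Schwarz/trace-inequality estimates of the two boundary functionals (the paper silently uses your observation that $\|Q_0\phi-Q_b\phi\|_e\le\|Q_0\phi-\phi\|_e$ when it replaces $Q_b\phi$ by $\phi$). The only small inaccuracy is your closing remark: the hypothesis $k\ge 1$ is not needed to put $Q_0\phi|_e$ inside $P_{k+1}(e)$ (that inclusion holds for every $k\ge 0$), but rather to secure the second-order approximation $\|Q_0\phi-\phi\|_T\le Ch_T^2|\phi|_{2,T}$ for the $H^2$ dual solution, which is what delivers the factor $Ch^{3/2}\|\phi\|_2$ in your bound for $\ell_1(u,Q_h\phi)$.
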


\begin{proof}
Letting $v=\epsilon_h$ in (\ref{ee1}) gives
\begin{eqnarray}
\|\epsilon_0\|^2=(\nabla_w Q_h\phi,\nabla_w \epsilon_h)_{\T_h}-\ell_1(\phi,\epsilon_h).\label{d5}
\end{eqnarray}
Letting $v=Q_h\phi$ in (\ref{eee}) gives
\begin{eqnarray}
(\nabla_w \epsilon_h,\; \nabla_w Q_h\phi)_{\T_h}&=&\ell_1(u,Q_h\phi).\label{d6}
\end{eqnarray}
It follows from (\ref{d5}) and (\ref{d6})
\begin{eqnarray}
\|\epsilon_0\|^2=\ell_1(u,Q_h\phi)-\ell_1(\phi,\epsilon_h).\label{d7}
\end{eqnarray}
By the Cauchy-Schwarz inequality, the trace inequality (\ref{trace}) and the definitions of $\Q_h$ and $Q_h$, then
\begin{eqnarray*}
|\ell_1(u,Q_h\phi)|&\le&\left| \langle (\nabla u-\Q_h\nabla
u)\cdot\bn,\;
Q_0\phi-Q_b\phi\rangle_{\pT_h} \right|\\
%&\le& \left(\sum_{T\in\T_h}\|\nabla u-\Q_h\nabla
%u\|^2_\pT\right)^{1/2}
%\left(\sum_{T\in\T_h}\|Q_0\phi-Q_b\phi\|^2_\pT\right)^{1/2}\nonumber \\
&\le& C\left(\sum_{T\in\T_h}h_T\|\nabla u-\Q_h\nabla
u\|^2_\pT\right)^{1/2}
\left(\sum_{T\in\T_h}h_T^{-1}\|Q_0\phi-\phi\|^2_\pT\right)^{1/2} \nonumber\\
&\le&  Ch^{k+3}|u|_{k+3}|\phi|_2.\nonumber
\end{eqnarray*}
It follows from (\ref{trace}), (\ref{happy}) and (\ref{err1})
\begin{eqnarray*}
|\ell_1(\phi,\epsilon_h)|&=&\left|\sum_{T\in\T_h}\langle (\nabla \phi-\Q_h\nabla
\phi)\cdot\bn, \epsilon_0-\epsilon_b\rangle_\pT\right|\\
%&\le & C \sum_{T\in\T_h}\|\nabla \phi-\Q_h\nabla \phi\|_{\pT}
%\|\epsilon_0-\epsilon_b\|_\pT\nonumber\\
&\le & C \left(\sum_{T\in\T_h}h_T\|\nabla \phi-\Q_h\nabla \phi\|_{\pT}^2\right)^{\frac12}
\left(\sum_{T\in\T_h}h_T^{-1}\|\epsilon_0-\epsilon_b\|_\pT^2\right)^{\frac12}\\
&\le & Ch^{k+3}|u|_{k+3}|\phi|_{2}.
\end{eqnarray*}
Using the two estimates above, (\ref{d7}) becomes
$$
\|\epsilon_0\|^2 \leq C h^{k+3}|u|_{k+3} \|\phi\|_2.
$$
Combining the above inequality with
the regularity assumption (\ref{reg}), we obtain
 $$
\|\epsilon_0\|\leq C h^{k+3}|u|_{k+3},
$$
which completes the proof.
\end{proof}

\section{Proof of Theorem \ref{zhang}}\label{proof}

Theorem \ref{zhang} is a corollary of the following lemma.

\begin{lemma}
Let $\Pi_h : H(\div,\Omega) \to H(\div,\Omega) \cap \otimes \Lambda_{k-1}(T)$ be defined in
 \eqref{p-j} below.  For $\bv \in H(\div,\Omega)$ and for all $T\in\mathcal T_h$,
   we have,
\begin{eqnarray}
(\Pi_h\bv,\;\bw)_T&=&(\bv,\;\bw)_T  \quad \forall \bw\in [P_{k-2}(T)]^d,\label{P1}\\
\l \Pi_h\bv\cdot\bn,\; q\r_e &=&\l \bv\cdot\bn,\; q\r_e\quad\forall q\in P_{k}(e),
     e\subset\pT,\label{P2} \\
(\nabla\cdot\bv,\;q)_T&=&(\nabla\cdot\Pi_h\bv,\;q)_T \quad\forall q\in P_{k-1}(T), \label{P3}\\
-(\nabla\cdot\bv, \;v_0)_{\T_h}&=&(\Pi_h\bv, \;\nabla_w v)_{\T_h}
    \quad \forall v=\{v_0,v_b\}\in V_h^0, \label{P4}\\
%-(\nabla\cdot\tau, \;v_0)_{\T_h}&=&(\Pi_h\tau, \;\nabla_w v)_{\T_h}\label{key1}\\
\|\Pi_h\bv-\bv\|&\le& Ch^{k+1}|\bv|_{k+1}.\label{P5}
\end{eqnarray}
\end{lemma}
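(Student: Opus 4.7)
The plan is to take $\Pi_h\bv$ to be defined locally on each polytopal element $T\in\T_h$ by the three families of moment conditions (\ref{P1})--(\ref{P3}), which will serve as the degrees of freedom on $\Lambda_{k-1}(T)$. Since (\ref{P2}) fixes $(\Pi_h\bv)\cdot\bn|_e$ to be the $L^2(e)$-projection of $\bv\cdot\bn$ onto $P_k(e)$, the normal trace is single-valued across every face in $\E_h$, so the local definitions paste together to give $\Pi_h\bv\in H(\div,\Omega)$ globally.

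The key step, and what I expect to be the main obstacle, is unisolvence: (\ref{P1})--(\ref{P3}) must be a system of degrees of freedom whose cardinality equals $\dim\Lambda_{k-1}(T)$ and which annihilates only the zero vector. This is nonstandard because $\Lambda_{k-1}(T)$ is a macro-element space of piecewise $[P_k]^d$ fields on the sub-simplex partition of $T$, glued in $H(\div,T)$ across internal sub-simplex faces, and subject to the global divergence constraint $\nabla\cdot\bv\in P_{k-1}(T)$. The usual single-simplex BDM unisolvence argument must be chained across sub-simplices and reconciled with this global divergence constraint; this is the genuinely polytopal step the paper has to carry out.

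Granted unisolvence, the compatibility identity (\ref{P4}) follows directly from the definition (\ref{d-d}) of $\nabla_w$. On each $T$, using $\bq=\Pi_h\bv$ as the test vector in (\ref{d-d}) gives
\[
(\Pi_h\bv,\nabla_w v)_T = -(v_0,\nabla\cdot\Pi_h\bv)_T + \l v_b,\Pi_h\bv\cdot\bn\r_{\pT}.
\]
Applying (\ref{P3}) with $q=v_0$ and (\ref{P2}) with $q=v_b|_e$ on each $e\subset\pT$ replaces $\Pi_h\bv$ by $\bv$ on the right. Summing over $T\in\T_h$, using that $\bv\cdot\bn$ is single-valued on interior faces (because $\bv\in H(\div,\Omega)$) together with $v_b|_{\partial\Omega}=0$, the face contributions cancel in pairs or vanish, leaving $(\Pi_h\bv,\nabla_w v)_{\T_h}=-(\nabla\cdot\bv,v_0)_{\T_h}$, which is (\ref{P4}).

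For the approximation estimate (\ref{P5}), I would first check the inclusion $[P_k(T)]^d\subset\Lambda_{k-1}(T)$: any vector polynomial of degree $\le k$ on $T$ restricts to $[P_k(T_i)]^d$ on every sub-simplex, has divergence in $P_{k-1}(T)$ globally, and has $P_k$ normal trace on every boundary face. Unisolvence then forces $\Pi_h$ to reproduce $[P_k(T)]^d$ exactly. A standard Bramble--Hilbert plus scaling argument on the reference polytope (using shape-regularity of the sub-simplex partition from \cite{wymix}) then yields $\|\Pi_h\bv-\bv\|_T\le Ch_T^{k+1}|\bv|_{k+1,T}$ on each $T$, and summing over $T\in\T_h$ gives (\ref{P5}).
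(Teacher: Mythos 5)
Your handling of \eqref{P4} and the outline for \eqref{P5} ($P_k$-reproduction plus stability plus Bramble--Hilbert) match the paper, but the core of this lemma is precisely the step you defer: constructing $\Pi_h$ and proving it is well defined, and your proposed route for that step would not work. You cannot take \eqref{P1}--\eqref{P3} as the degrees of freedom on $\Lambda_k(T)$. First, they are linearly dependent: for $q\in P_{k-1}(T)$, integration by parts gives $(\nabla\cdot\Pi_h\bv,q)_T=-(\Pi_h\bv,\nabla q)_T+\langle \Pi_h\bv\cdot\bn,q\rangle_{\pT}$, so \eqref{P3} is already a consequence of \eqref{P1} and \eqref{P2} and carries no independent information. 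Second, and more seriously, they grossly undercount the dimension of the macro-element space: $\Lambda_k(T)$ consists of piecewise vector polynomials on the $n$ sub-simplices $T_i$ of $T$, so its dimension grows with $n$, whereas \eqref{P1}--\eqref{P3} impose only global moments on $T$ and moments on the outer faces. Nothing in your set of conditions controls the normal components on the internal sub-simplex faces, the jumps needed to enforce $H(\div,T)$-conformity, the divergence-free (curl-type) parts on each individual $T_i$, or the constraint that $\nabla\cdot\Pi_h\bv$ be a single polynomial on all of $T$. The homogeneous system for your conditions therefore has a large kernel, and unisolvence fails.

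The paper's proof supplies exactly the missing structure: $\Pi_h$ is defined by the square linear system \eqref{p-j}, which augments the boundary-face moments with per-triangle normal moments and jump conditions on internal faces (\eqref{p-j-c}, \eqref{p-j-d}), conditions gluing the normal trace on a subdivided outer face into one polynomial (\eqref{p-j-b}), interior moments against $\nabla P_{k-1}(T)$ and against a curl-polynomial space $CP(T_i)$ on each sub-tetrahedron (\eqref{p-j-e}, \eqref{p-j-f}), and conditions forcing $\nabla\cdot\Pi_h\bv$ to be one piece on $T$ (\eqref{p-j-g}). The authors then count that the number of equations equals $n\dim[P_k]^3$ and prove unisolvence of the homogeneous system by showing $\int_T(\nabla\cdot\Pi_h\bv)^2\,d\bx=0$ via integration by parts and then invoking the $CP(T_i)$ moments to conclude $\Pi_h\bv=0$; properties \eqref{P1}--\eqref{P3} are read off from these degrees of freedom (e.g.\ \eqref{P1} via the Helmholtz-type splitting $\bw=\nabla p_{k-1}+\nabla\times\bq_{k-1}$), rather than taken as the definition. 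Identifying this difficulty is not the same as resolving it, so as written the proposal has a genuine gap at the decisive step.
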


\begin{proof}  We prove the lemma in 3D. The proof for 2D lemma is similar and much
  simpler.

  We assume no additional inner edges is introduced when subdividing
   a polyhedron $T$ in to $n$ tetrahedrons $\{T_i\}$.
   That is,  we have precisely $n-1$ internal  triangles which separate $T$
    into $n$ parts.  For simple notation,  only one outside polygonal
     face $e_1$ of $T$ is subdivided in to
   $m$ triangles, $e_{1,1}, \dots, e_{1,m}$.
   For hexahedral finite elements \cite{Zhang-h,Zhang-t,Zhang-Zhang},
    a face quadrilateral can be curved, i.e., the image of a square under
    a tri-linear mapping. Such a curved face polygon is taken as two face triangles of
    a polyhedron $T$.

  On $n$ tetrahedrons,  a function of $\Lambda_k$ can be expressed as
 \an{\label{v-e}  \bv_h|_{T_{i_0}} = \sum_{i+j+l\le k}  \p{a_{1,ijl}\\a_{2,ijl}\\a_{3,ijl}} x^i y^j z^l, \
    i_0=1,...n.  }
$\bv_h|_{T}$ is determined by $n \dim [P_k]^3 =  {n(k+1)(k+2)(k+3)}/2 $
  coefficients.
For any $\bv\in H(\div;T)$, $\Pi_h \b v\in \Lambda_k(T)$ is defined by
\begin{subequations} \label{p-j}
\an{ \label{p-j-a}   \int_{e_{ij}\subset \partial T} (\Pi_h \b v-\bv) \cdot \b n_{ij}
         p_k dS & = 0 \quad \forall p_k \in P_k(e_{ij}), e_{ij}\ne e_{1,\ell},\ell\ge 2, \\
   \label{p-j-b}   \int_{e_{11}\subset \partial T} (\Pi_h \b v|_{e_{11}}-\Pi_h \b v|_{e_{1j}})
       \cdot \b n_{11}
         p_k dS & = 0 \quad \forall p_k \in P_k(e_{1j}),j=2,...,m, \\
   \label{p-j-c}   \int_{e_{ij}\subset T^0} (\Pi_h \b v-\bv) \cdot \b n_{ij}p_k  dS &=0
               \quad \forall p_k \in P_k(e_{ij}) \setminus P_0(e_{ij}), \\
   \label{p-j-d}   \int_{e_{ij}\subset T^0} [\Pi_h \b v]\cdot \b n_{ij}p_k  dS &=0
               \quad \forall p_k \in P_k(e_{ij}), \\
  \label{p-j-e}  \int_T (\Pi_h \b v -\bv )\cdot \nabla p_{k-1} d \b x &=0 \quad
                \forall p_{k-1} \in P_{k-1}(T)\setminus P_0(T), \\
    \label{p-j-f}  \int_{T_i}  (\Pi_h \b v -\bv)\cdot p d \b x &=0 \quad
                \forall p \in CP(T_i), \ i=1,...n, \\
      \label{p-j-g}  \int_{T_1} \nabla \cdot ( \Pi_h \b v|_{T_i} -\Pi_h \b v|_{T_1} ) p_{k-1} d \b x &=0 \quad
                \forall p_{k-1} \in P_{k-1}(T_1), \ i=2,...,n,
 } \end{subequations}
where $e_{ij}$ is the $j$-th face triangle of $T_i$ with a fixed normal vector $\bn_{ij}$,
        $[\cdot]$ denotes the jump on a face triangle,
    $\Pi_h\b v|_{T_i}$ is understood as a polynomial vector which can be used on
    another tetrahedron $T_1$,
     $e_{1j}\subset e_1\subset \partial T$ is a face triangle of
      $T_{i_j}$,
      $\Pi_h \b v|_{e_{1j}}$ is extended to the whole $e_1$ as
      one polynomial,
    and curl-polynomial space
\a{ CP(T_i)=\{ \b v\in & [P_k(T_i)]^3 \mid   \ \b v\cdot \b n_{ij}=0 \text{ \ on } e_{ij} \subset
    \partial T_i, \\
       & \int_{T_i} \b v \cdot \nabla p_{k-1} d \b x =0 \quad \forall p_{k-1}\in P_{k-1}(T_i)
    \},  }
where $e_{ij}$ also denotes the four face triangles of $T_i$.
The linear system \eqref{p-j} of equations has the following number of equations,
\a{  &\quad \ ( 2n+3-m)\frac{(k+1)(k+2)}2 + ( m-1 )\frac{(k+1)(k+2)}2 \\
     &\quad   \ + ( n-1 )\frac{(k+1)(k+2)-2}2 + ( n-1 )\frac{(k+1)(k+2)}2 \\
     &\quad  \ +  \frac{k(k+1)(k+2)-6}6 \\
     &\quad \ + n \Big( \frac{(k-1)k(k+1)}2 - \frac{ (k-2)(k-1)k}6 \Big) \\
     &\quad \ +(n-1)  \frac{k(k+1)(k+2)}6 \\
     &= \frac{n(k+1)(k+2)(k+3)}2,
  } which is exactly the number of coefficients for a $\bv_h$ function in \eqref{v-e}.
Thus we have a square linear system.
The square system has a unique solution if and only if the homogeneous system has the trivial
  solution.

Let $\bv=0$ in \eqref{p-j}.
By \eqref{p-j-a} and \eqref{p-j-b}, $\Pi_h\bv\cdot \b n= 0$ on the
  whole boundary $\partial T$.
By \eqref{p-j-c} and \eqref{p-j-d},  $\int_{e_{ij}}[\Pi_h \bv\cdot \b n_{ij} ] dS=0$ and
   $\int_{e_{ij}} \Pi_h \bv\cdot \b n_{ij} p dS=0$ for all $p\in P_k\setminus P_0$ on inter-element
   triangles $e_{ij}$.
By \eqref{p-j-g}, $\nabla \cdot \Pi_h\bv$ is a one-piece polynomial on the whole
   $T$.
Therefore, by \eqref{p-j-e},
    we have
\an{\label{d0} &\quad \ \int_T (\nabla \cdot \Pi_h\bv)^2 d \b x \\
 \nonumber &=\sum_{i=1}^n
      \Big(\int_{T_i} -\Pi_h\bv\cdot \nabla(\nabla \cdot  \Pi_h\bv ) d\b x
          +\int_{\partial T_i} \Pi_h\bv\cdot \b n (\nabla \cdot \Pi_h\bv) dS \Big) \\
  \nonumber  &=0.  }
That is,
\a{\nabla \cdot  \Pi_h\bv=0 \quad\text{ on } \ T.  }
Thus \an{ \label{div0} \Pi_h\bv|_{T_i} \in CP(T_i), \quad i=1,...,n. }
By \eqref{p-j-f}, $\Pi_h \bv=\b 0$. Hence $\Pi_h \bv$ is well defined.

For any $\b w \in [P_{k-2}(T)]^3$, we have $\b w = \nabla p_{k-1} + \nabla \times \b q_{k-1}$ on
  all $T_i$, where $\b q_{k-1}|_{T_i} \in [P_{k-1}(T_i)]^3$ can be chosen such that
    $\nabla \times \b q_{k-1} \in CP(T_i)$.
 By \eqref{p-j-e} and \eqref{p-j-f}, \eqref{P1} holds.

\eqref{P2} follows \eqref{p-j-a} and \eqref{p-j-b}.

Replacing one $\nabla \cdot \Pi_h\bv$ by $q$ in \eqref{d0}, \eqref{P3} follows.

It follows from (\ref{P3}) and (\ref{d-d}) that for $v=\{v_0,v_b\}\in V_h^0$
\begin{eqnarray*}
-(\nabla\cdot\bv , \;v_0)_{\T_h}&=&-(\nabla\cdot\Pi_h\bv , \;v_0)_{\T_h}\\
&=&-(\nabla\cdot\Pi_h\bv , \;v_0)_{\T_h}+\l v_b, \Pi_h\bv \cdot\bn\r_{\partial \T_h}\\
&=&(\Pi_h\bv , \;\nabla_w v)_{\T_h},
\end{eqnarray*} which proves \eqref{P4}.

On a size-$1$ $T$, by the finite dimensional norm-equivalence  and the
   shape-regularity assumption on sub-triangles,  the interpolation is stable in $L^2(T)$,
  i.e.,
\an{  \|\Pi_h \bv  \|_T \le C \|\bv \|_T.   \label{T-stable} }
After a scaling, the constant $C$ in \eqref{T-stable} remains same for all $h>0$.
Since $[P_k(T)]^3\subset \Lambda_k$ and $\Pi_h$ is uni-solvent, $\Pi_h \b v = \b v$ for all
  $\bv \in [P_k(T)]^3$.
It follows that, by $\Pi_h$'s $P_k$-polynomial preservation,
\a{ \| \Pi_h\bv  -\bv  \|^2 &\le C
    \sum_{T\in\mathcal T_h} (\| \Pi_h(\bv  -p_{k,T}) \|_T^2 + \| p_{k,T} -\bv  \|_T^2 ) \\
      &\le C
    \sum_{T\in\mathcal T_h} (C \| \bv  -p_{k,T} \|_T^2 + \| p_{k,T} -\bv  \|_T^2 ) \\
      &\le C
    \sum_{T\in\mathcal T_h} h^{2k+2}  | \bv   |_{k+1,T}^2  \\
      &=C h^{2k+2}  | \bv   |_{k+1}^2,
} where $p_{k,T}$ is the $k$-th Taylor polynomial of $\bv $ on $T$.
\end{proof}

\section{Numerical Experiments}\label{Section:numerical-experiments}

We solve the Poisson problem \eqref{pde}-\eqref{bc} on the unit square domain with the exact
  solution
\an{ \label{s-1} u=\sin(\pi x)\sin(\pi y).
  }
We compute the solution \eqref{s-1} on a type of quadrilateral grids, shown
   in Figure \ref{g-4}.
Here to avoid convergence to parallelograms under the nest refinement of quadrilaterals,
  we fix the shape of quadrilaterals on all levels of grids.
We list the computation in Table \ref{t1}.
We have two orders of superconvergence in $L^2$-norm and in $H^1$-like norm
  for all order finite elements,
  except for $P_0$ element which has only one order superconvergence in $L^2$ norm.
Both cases confirm the theoretic convergence rates.

\begin{figure}[htb]\begin{center}
\includegraphics[width=1.4in]{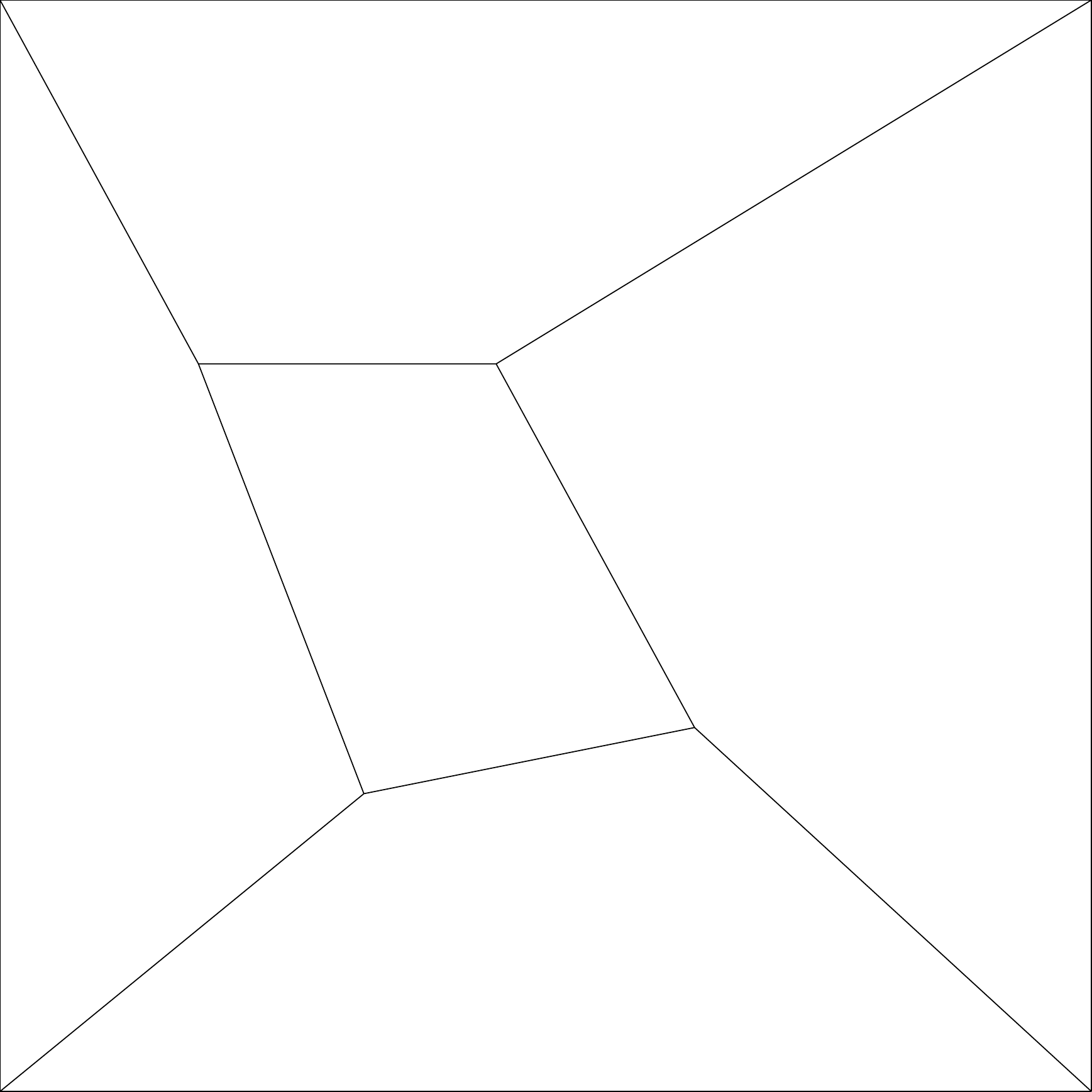} \
\includegraphics[width=1.4in]{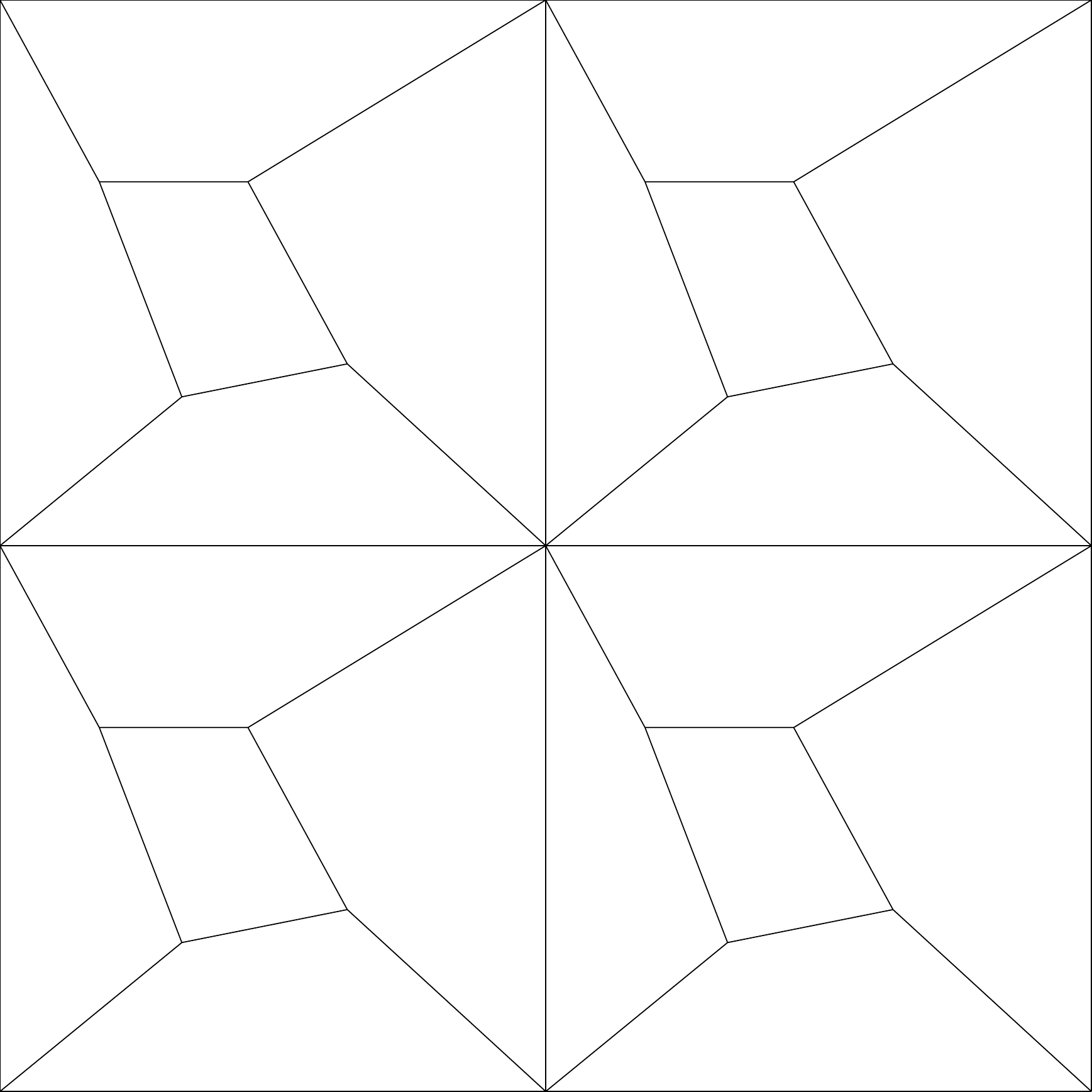} \
\includegraphics[width=1.4in]{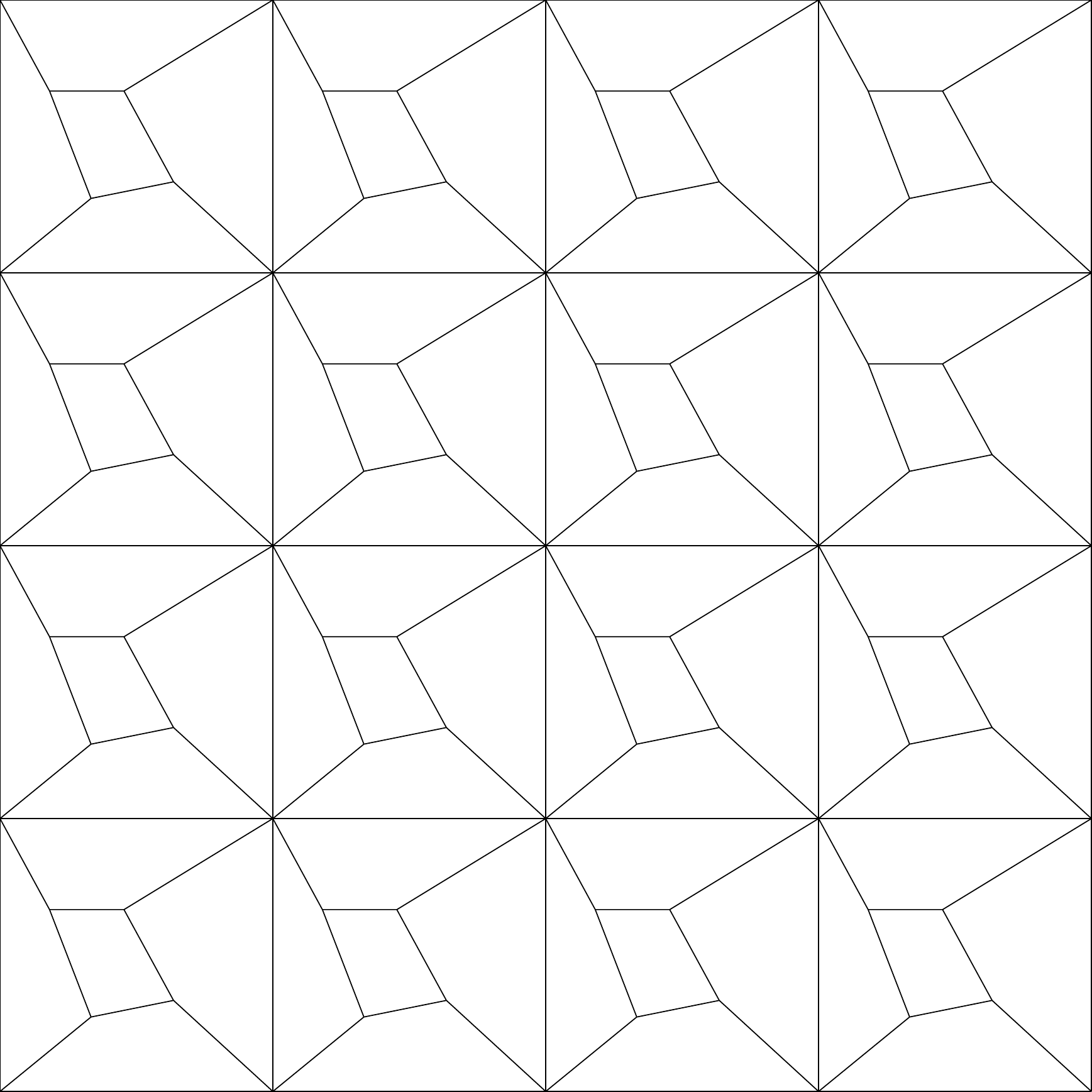}

\caption{The first three levels of quadrilateral grids, for Table \ref{t1}.  }
\label{g-4}
\end{center}
\end{figure}

\begin{table}[h!]
  \centering   \renewcommand{\arraystretch}{1.05}
  \caption{ Error profiles and convergence rates on quadrilateral
      grids shown in Figure \ref{g-4} for \eqref{s-1}. }
\label{t1}
\begin{tabular}{c|cc|cc}
\hline
level & $\|Q_h u-  u_h \|_0 $  &rate &  $\3bar Q_h u- u_h \3bar $ &rate   \\
\hline
 &\multicolumn{4}{c}{by the $P_0$-$P_1$($\Lambda_0$) WG element} \\ \hline
 6&   0.2533E-03 &  2.00&   0.1448E-02 &  2.00 \\
 7&   0.6337E-04 &  2.00&   0.3620E-03 &  2.00 \\
 8&   0.1584E-04 &  2.00&   0.9050E-04 &  2.00 \\
\hline
 &\multicolumn{4}{c}{by the $P_1$-$P_2$($\Lambda_1$) WG element} \\ \hline
 5&   0.9360E-06 &  4.00&   0.2156E-03 &  3.00 \\
 6&   0.5851E-07 &  4.00&   0.2696E-04 &  3.00 \\
 7&   0.3663E-08 &  4.00&   0.3371E-05 &  3.00 \\
 \hline
 &\multicolumn{4}{c}{by the $P_2$-$P_3$($\Lambda_2$) WG element} \\ \hline
 4&   0.7659E-06 &  4.98&   0.1487E-03 &  3.99 \\
 5&   0.2404E-07 &  4.99&   0.9307E-05 &  4.00 \\
 6&   0.7521E-09 &  5.00&   0.5819E-06 &  4.00 \\
\hline
 &\multicolumn{4}{c}{by the $P_3$-$P_4$($\Lambda_3$) WG element} \\ \hline
 2&   0.1439E-03 &  3.84&   0.9642E-02 &  3.24 \\
 3&   0.2319E-05 &  5.95&   0.3065E-03 &  4.98 \\
 4&   0.3646E-07 &  5.99&   0.9620E-05 &  4.99 \\
 \hline
\end{tabular}%
\end{table}%

\begin{figure}[htb]\begin{center}
\includegraphics[width=1.4in]{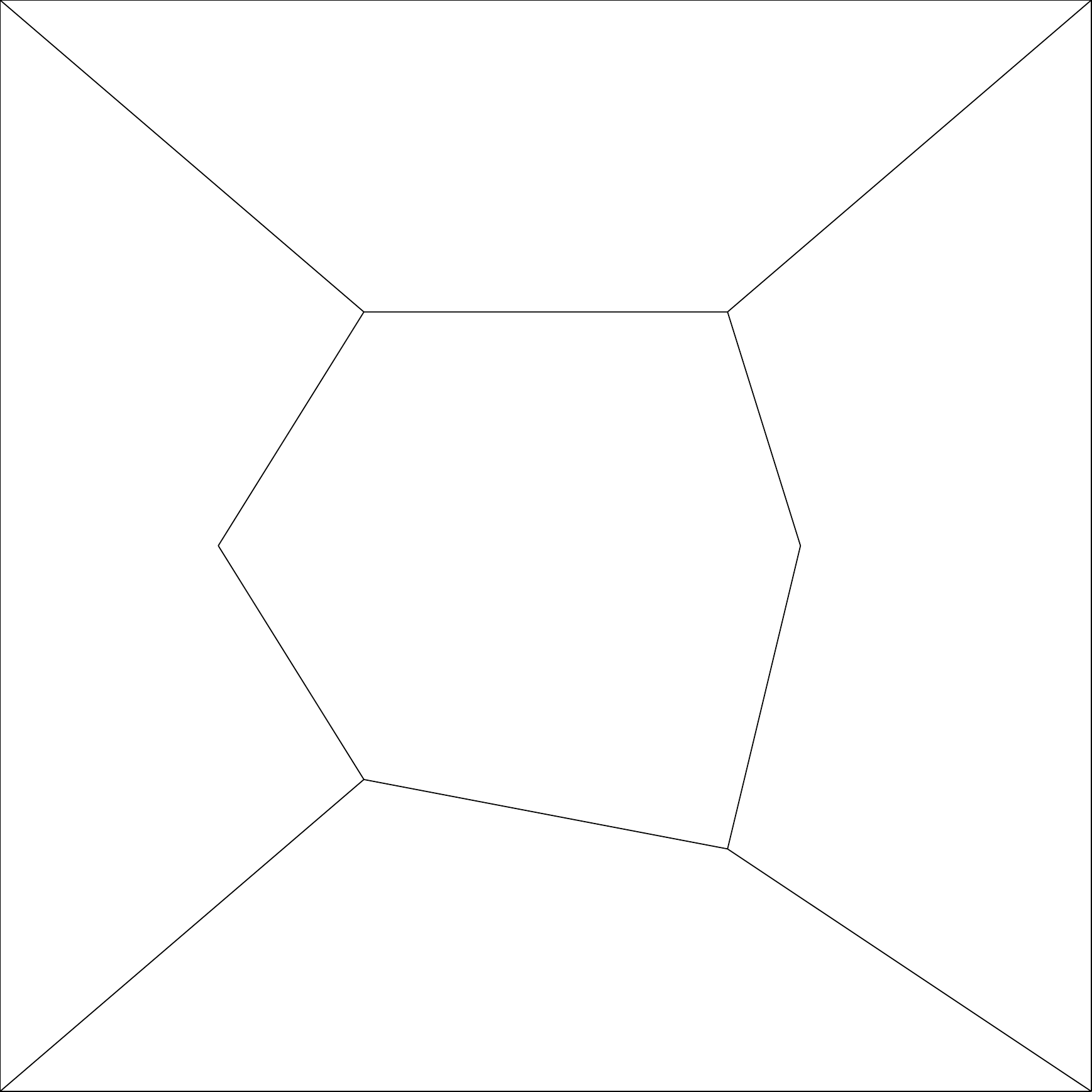} \
\includegraphics[width=1.4in]{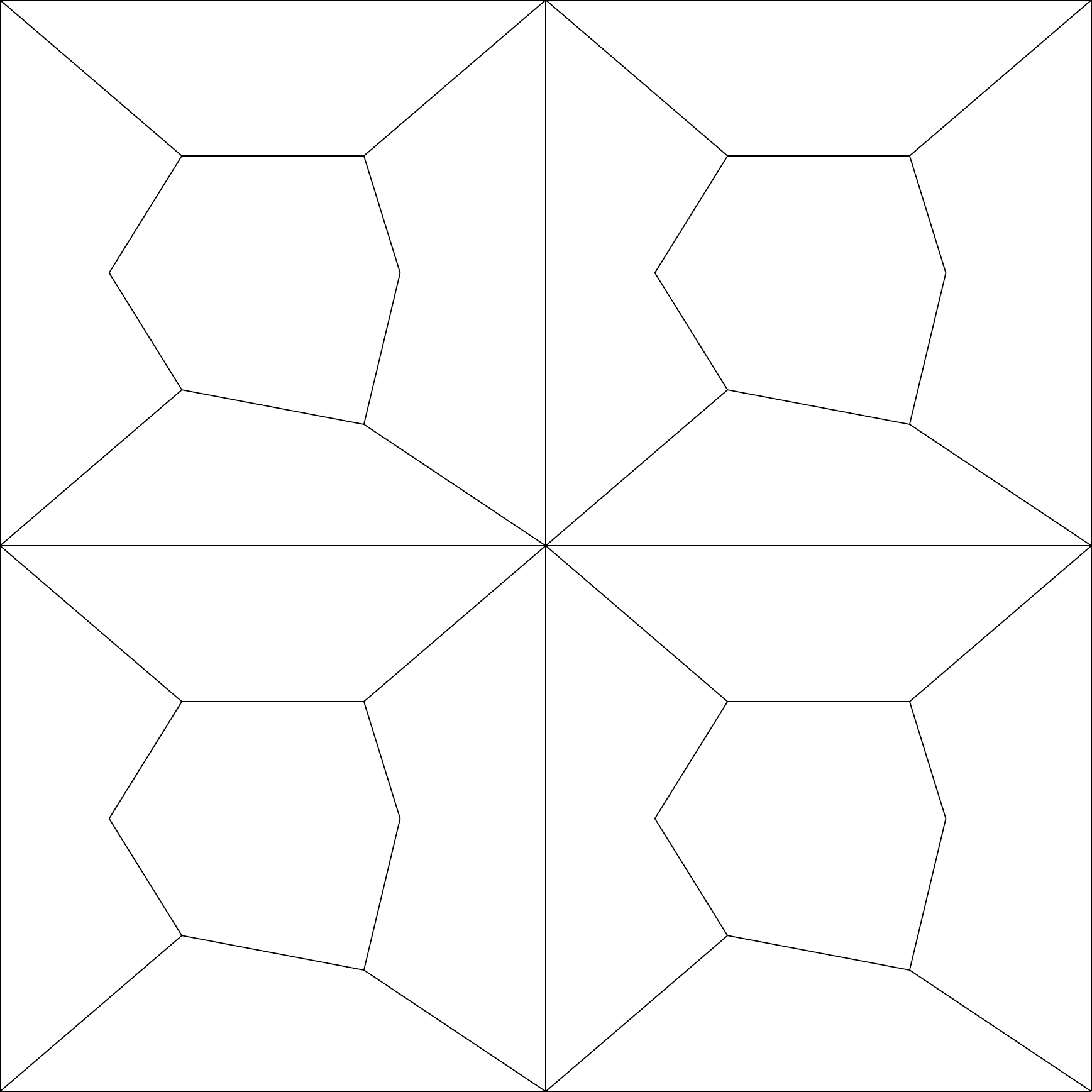} \
\includegraphics[width=1.4in]{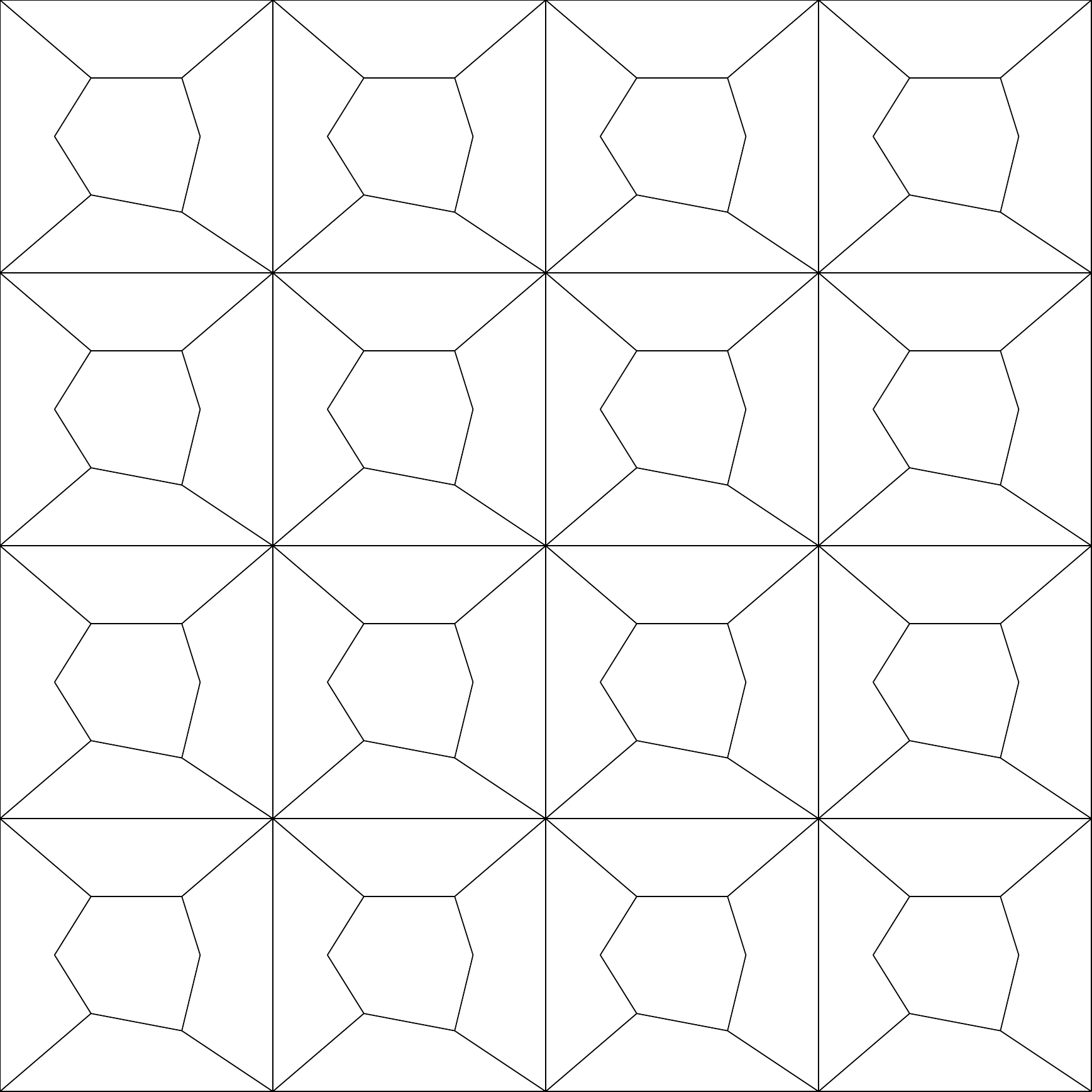}

\caption{The first three levels of quadrilateral-hexagon grids, for Table \ref{t2}.  }
\label{g-6}
\end{center}
\end{figure}

Next we solve the same problem \eqref{s-1} on a type of grids with quadrilaterals and hexagons,
 shown in Figure \ref{g-6}.
We list the result of computation in Table \ref{t2} where all theoretic convergence
  results are matched.

\begin{table}[h!]
  \centering   \renewcommand{\arraystretch}{1.05}
  \caption{ Error profiles and convergence rates on polygonal grids shown in Figure \ref{g-6} for \eqref{s-1}. }
\label{t2}
\begin{tabular}{c|cc|cc}
\hline
level & $\|Q_h u-  u_h \|_0 $  &rate &  $\3bar Q_h u- u_h \3bar $ &rate   \\
\hline
 &\multicolumn{4}{c}{by the $P_0$-$P_1$($\Lambda_0$) WG element} \\ \hline
 6&   0.2524E-03 &  2.00&   0.1313E-02 &  2.00 \\
 7&   0.6315E-04 &  2.00&   0.3282E-03 &  2.00 \\
 8&   0.1579E-04 &  2.00&   0.8204E-04 &  2.00 \\
\hline
 &\multicolumn{4}{c}{by the $P_1$-$P_2$($\Lambda_1$) WG element} \\ \hline
 6&   0.4117E-07 &  4.00&   0.1620E-04 &  3.00 \\
 7&   0.2574E-08 &  4.00&   0.2025E-05 &  3.00 \\
 8&   0.1615E-09 &  3.99&   0.2531E-06 &  3.00 \\
 \hline
 &\multicolumn{4}{c}{by the $P_2$-$P_3$($\Lambda_2$) WG element} \\ \hline
 4&   0.3371E-06 &  4.98&   0.7750E-04 &  3.99 \\
 5&   0.1058E-07 &  4.99&   0.4849E-05 &  4.00 \\
 6&   0.3417E-09 &  4.95&   0.3032E-06 &  4.00 \\
\hline
 &\multicolumn{4}{c}{by the $P_3$-$P_4$($\Lambda_3$) WG element} \\ \hline
 2&   0.5538E-04 &  4.17&   0.4462E-02 &  3.32 \\
 3&   0.8817E-06 &  5.97&   0.1414E-03 &  4.98 \\
 4&   0.1382E-07 &  6.00&   0.4436E-05 &  4.99 \\
 \hline
\end{tabular}%
\end{table}%

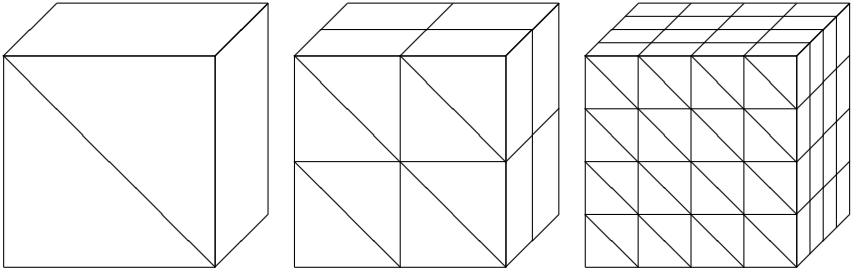
\begin{figure}[h!]
\begin{center}
 \setlength\unitlength{1pt}
    \begin{picture}(320,118)(0,3)
    \put(0,0){\begin{picture}(110,110)(0,0)
       \multiput(0,0)(80,0){2}{\line(0,1){80}}  \multiput(0,0)(0,80){2}{\line(1,0){80}}
       \multiput(0,80)(80,0){2}{\line(1,1){20}} \multiput(0,80)(20,20){2}{\line(1,0){80}}
       \multiput(80,0)(0,80){2}{\line(1,1){20}}  \multiput(80,0)(20,20){2}{\line(0,1){80}}
    \put(80,0){\line(-1,1){80}}% \put(80,0){\line(1,5){20}}\put(80,80){\line(-3,1){60}}
      \end{picture}}
    \put(110,0){\begin{picture}(110,110)(0,0)
       \multiput(0,0)(40,0){3}{\line(0,1){80}}  \multiput(0,0)(0,40){3}{\line(1,0){80}}
       \multiput(0,80)(40,0){3}{\line(1,1){20}} \multiput(0,80)(10,10){3}{\line(1,0){80}}
       \multiput(80,0)(0,40){3}{\line(1,1){20}}  \multiput(80,0)(10,10){3}{\line(0,1){80}}
    \put(80,0){\line(-1,1){80}}% \put(80,0){\line(1,5){20}}\put(80,80){\line(-3,1){60}}
       \multiput(40,0)(40,40){2}{\line(-1,1){40}}
       %  \multiput(80,40)(10,-30){2}{\line(1,5){10}}
       %  \multiput(40,80)(50,10){2}{\line(-3,1){30}}
      \end{picture}}
    \put(220,0){\begin{picture}(110,110)(0,0)
       \multiput(0,0)(20,0){5}{\line(0,1){80}}  \multiput(0,0)(0,20){5}{\line(1,0){80}}
       \multiput(0,80)(20,0){5}{\line(1,1){20}} \multiput(0,80)(5,5){5}{\line(1,0){80}}
       \multiput(80,0)(0,20){5}{\line(1,1){20}}  \multiput(80,0)(5,5){5}{\line(0,1){80}}
    \put(80,0){\line(-1,1){80}}% \put(80,0){\line(1,5){20}}\put(80,80){\line(-3,1){60}}
       \multiput(40,0)(40,40){2}{\line(-1,1){40}}
       %  \multiput(80,40)(10,-30){2}{\line(1,5){10}}
       %  \multiput(40,80)(50,10){2}{\line(-3,1){30}}

       \multiput(20,0)(60,60){2}{\line(-1,1){20}}   \multiput(60,0)(20,20){2}{\line(-1,1){60}}
       %  \multiput(80,60)(15,-45){2}{\line(1,5){5}} \multiput(80,20)(5,-15){2}{\line(1,5){15}}
       %  \multiput(20,80)(75,15){2}{\line(-3,1){15}}\multiput(60,80)(25,5){2}{\line(-3,1){45}}
      \end{picture}}

    \end{picture}
    \end{center}
\caption{  The first three levels of wedge grids used in Table \ref{t3}. }
\label{grid3}
\end{figure}

We solve a 3D problem \eqref{pde}--\eqref{bc} on the unit cube domain
  $\Omega=(0,1)^3$ with the exact
  solution
\an{ \label{s-2}  u&=\sin(\pi x) \sin(\pi y)\sin(\pi z). }

 Here we use a uniform wedge-type (polyhedron with 2 triangular faces and 3 rectangular faces)
   grids,  shown in Figure \ref{grid3}.   Here each wedge is subdivided in to three
   tetrahedrons with three rectangular faces being cut each in to two triangles,
    when defining a piecewise polynomial space $\Lambda_k$ for the
  weak gradient.
 The results are listed in Table \ref{t3},  confirming the two-order superconvergence in
  the two norms for all polynomial-degree $k\ge 1$ elements.

\begin{table}[h!]
  \centering   \renewcommand{\arraystretch}{1.05}
  \caption{ Error profiles and convergence rates on grids shown in Figure \ref{grid3} for \eqref{s-2}. }
\label{t3}
\begin{tabular}{c|cc|cc}
\hline

level & $\|Q_h u-  u_h \|_0 $  &rate &  $\3bar Q_h u- u_h \3bar $ &rate   \\
\hline
 &\multicolumn{4}{c}{by the $P_0$-$P_1$($\Lambda_0$) WG element} \\ \hline
 4&     0.0101282&1.8&     0.1286817&2.0 \\
 5&     0.0026250&1.9&     0.0324419&2.0 \\
 6&     0.0006623&2.0&     0.0081278&2.0 \\
\hline
 &\multicolumn{4}{c}{by the $P_1$-$P_2$($\Lambda_1$) WG element} \\ \hline
 4&     0.0001608&3.9&     0.0250022&3.0 \\
 5&     0.0000102&4.0&     0.0031359&3.0 \\
 6&     0.0000006&4.0&     0.0003923&3.0 \\
 \hline
 &\multicolumn{4}{c}{by the $P_2$-$P_3$($\Lambda_0$) WG element} \\ \hline
 3&     0.0003973&4.9&     0.0701007&3.9 \\
 4&     0.0000126&5.0&     0.0044450&4.0 \\
 5&     0.0000004&5.0&     0.0002788&4.0 \\
\hline
 &\multicolumn{4}{c}{by the $P_3$-$P_4$($\Lambda_0$) WG element} \\ \hline
 3&    0.7113E-04&5.9&    0.1988E-01&4.9 \\
 4&    0.1126E-05&6.0&    0.6295E-03&5.0 \\
 5&    0.1767E-07&6.0&    0.1974E-04&5.0 \\
 \hline
\end{tabular}%
\end{table}%

\end{document}